\documentclass[9pt,reqno,twoside,final]{amsart}
\usepackage{amsmath,amstext,amsthm,amsfonts,amssymb,amscd}

\numberwithin{equation}{section} 

\makeatletter 
\def\tagform@#1{\maketag@@@{$\langle$\ignorespaces#1\unskip\@@italiccorr$\rangle$}}




\newcommand{\N}{\mathbb{N}}


\renewcommand{\phi}{\varphi}
\renewcommand{\epsilon}{\varepsilon}
\renewcommand{\kappa}{\varkappa}       
\renewcommand{\ge}{\geqslant}           









\newtheorem{theorem}{Theorem}[section]
\newtheorem{proposition}{Proposition}[section]

\newtheorem{lemma}{Lemma}[section]

\theoremstyle{definition}

\theoremstyle{remark}



\theoremstyle{definition}

\newtheorem*{rem*}{Remark}
\newtheorem*{acknow*}{Acknowledgements}
\newtheorem*{examples*}{Examples}

\theoremstyle{plain}

\newtheorem*{theorem*}{Theorem}
\newenvironment{proof-sketch}{\noindent{\bf Sketch of Proof}\hspace*{1em}}{\qed\bigskip}
\newenvironment{proof-idea}{\noindent{\bf Proof Idea}\hspace*{1em}}{\qed\bigskip}
\newenvironment{proof-of-lemma}[1]{\noindent{\bf Proof of Lemma #1}\hspace*{1em}}{\qed\bigskip}
\newenvironment{proof-of-prop}[1]{\noindent{\bf Proof of Proposition #1}\hspace*{1em}}{\qed\bigskip}
\newenvironment{proof-of-thm}[1]{\noindent{\bf Proof of Theorem #1.}\hspace*{1em}}{\qed\bigskip}
\newenvironment{proof-attempt}{\noindent{\bf Proof Attempt}\hspace*{1em}}{\qed\bigskip}

 \tolerance 8000

\title[Finite Gr\"obner basis algebras with unsolvable problems]{Finite Gr\"obner basis algebras with unsolvable nilpotency problem 
and zero divisors problem}
\author{Ilya Ivanov-Pogodaev, Sergey Malev}
\address{Moscow Institute of Physics and Technology, Moscow, Russia }
\email{ivanov.pogodaev@gmail.com}
\address{School of mathematics, University of Edinburgh,
Edinburgh, UK}
\email{sergey.malev@ed.ac.uk}

\thanks{We would like to thank Agata Smoktunowicz and Alexei Kanel-Belov for interesting and fruitful discussions regarding this
paper}
\thanks{This research was supported by ERC Advanced grant Coimbra 320974.}

\thanks{This research was supported by Young Russian Mathematics award.}

\thanks{This research was supported by Russian Science Foundation  (grant no. 17-11-01377).}

\begin{document}
\maketitle

\renewcommand{\baselinestretch}{1.0}

\renewcommand{\labelenumi}{[\theenumi]}

\begin{abstract}
 This work presents a sample constructions of two algebras 
 both with the ideal of relations defined by a 
  finite Gr\"obner basis.  For the first algebra the question whether a given element is nilpotent is algorithmically unsolvable,
for the second one the question whether a given element is a zero divisor is algorithmically unsolvable. 
This gives a negative answer to questions raised by Latyshev.
  for which the question whether a given element is nilpotent is algorithmically
unsolvable. This gives a negative answer to a question raised by Latyshev.
\end{abstract}


\section{Introduction}

The word equality problem in finitely presented semigroups (and in algebras)
cannot be algorithmically solved.
This was proved in 1947 by Markov (\cite{Markov}) and independently by Post(\cite{Post}).
In 1952 Novikov constructed the first example of the group with unsolvable problem of word equality (see \cite{Novikov1} and \cite{Novikov2}).

In 1962 Shirshov proved solvability of the equality problem for Lie algebras with one relation
and raised a question about finitely defined Lie algebras (see \cite{Sh}).

In 1972 Bokut settled this problem.
In particular, he showed the existence of a finitely defined Lie algebra over an arbitrary field
with algorithmically unsolvable identity problem (\cite{Bokut}).

A detailed overview of algorithmically unsolvable problems can be found in  \cite{Ob}.

\medskip

Otherwise, some problems become decidable if
a finite Gr\"obner basis
defines a relations ideal.
In this case it is easy to determine whether two elements of the algebra are equal or not (see \cite{Bergman2}).

Gr\"obner bases for various structures are investigated by the Bokut
school in Guangzhou (\cite{BokutChen}).

In his work, Piontkovsky extended the concept of obstruction,
introduced by Latyshev (see \cite{Piont}, \cite{Piont2},
\cite{Piont3}, \cite{Piont4}).

Latyshev raised the question concerning the existence
of an algorithm that can find out if a given element is either a zero divisor
 or a nilpotent element when the ideal of
relations in the algebra is defined by a finite Gr\"obner basis.

Similar questions for monomial automaton algebras can be solved.
In this case the existence of an algorithm for nilpotent element or a zero divisor was
proved by Kanel-Belov, Borisenko and Latyshev \cite{BBL}. Note that these algebras are not Noetherian and not weak Noetherian.
Iyudu showed that the element property of being one-sided zero divisor is recognizable
in the class of algebras with a one-sided limited processing (see \cite{Iudu}, \cite{IuduDisser}).
It also follows from a solvability of a linear recurrence relations system on a tree (see \cite{Belov}).

\medskip

An example of an algebra with a finite Gr\"obner basis and algorithmically unsolvable problem
of zero divisor is constructed in \cite{IP}. 

A notion of {\it Gr\"obner basis} (better to say {\it Gr\"obner-Shirshov basis})
first appeared in the context of noncommutative (and not Noetherian) algebra. 
Note also that Poincar\'e-Birkhoff-Witt theorem can be canonically proved using Gr\"obner bases.
More detailed discussions of these questions see in \cite{Bokut}, \cite{Uf2}, \cite{BBL}.

In the present paper we construct an algebra with a finite Gr\"obner basis and algorithmically unsolvable problem of nilpotency.
We also provide a shorter construction for the zero divisors question.

For these constructions we simulate a universal Turing machine,
each step of which corresponds to a multiplication from the left by a chosen letter.

Thus, to determine whether an element is a zero divisor or is a nilpotent, it is not enough for an algebra to have
a finite Gr\"obner basis.

\section{The plan of construction}

Let $\Bbb A$ be an algebra over a field $\Bbb K$. Fix a finite alphabet of generators
$\{a_1,\dots , a_N\}$. A word in the alphabet of generators is called a {\it word in algebra}.

The set of all words in the alphabet is a semigroup. The main idea of the construction is a
realization of a universal Turing machine in the semigroup. We use the universal Turing
machine constructed by Marvin Minsky in \cite{Minsky}. This machine has $7$ states and $4$-color
tape. The machine can be completely defined by $28$ instructions.
Note that $27$ of them have a form
$$(i,j) \rightarrow (L,q(i,j),p(i,j)) \text{ or } (i,j) \rightarrow (R,q(i,j),p(i,j)),$$
where $0\leq i \leq 6$ is the current machine state, $0\leq j\leq 3$ is the current cell color,
$L$ or $R$ (left or right) is the direction of a head moving after execution of the current
instruction, $q(i,j)$ is the state after current instruction, $p(i,j)$ is the new color of
the current cell.

Thus, the instruction $(2,3) \rightarrow (L,3,1)$ means the following: ``If the color of the
current cell is $3$ and the state is $2$, then the cell changes the color to $1$, the head moves
one cell to the left, the machine changes the state to $3$.

The last instruction is $(4,3) \rightarrow\ $STOP. Hence, if the machine is in state $4$ and
the current cell has color $3$, then the machine halts.

\medskip

\subsection*{ Letters.}


By $Q_i$, $0\leq i\leq 6$ denote the current state of the machine. By $P_j$, $0\leq j\leq 3$
denote the color of the current cell.

The action of the machine depends on the current state $Q_i$ and current cell color $P_j$.
Thus every pair $Q_i$ and $P_j$ corresponds to one instruction of the machine.

The instructions moving the head to the left (right) are called {\it left} ({\it right}) ones.
Therefore there are {\it left pairs} $(i,j)$ for the left instructions, {\it right pairs} for
the right ones and instruction STOP for the pair $(4,3)$.


All cells with nonzero color are said to be {\it non-empty cells}.
We shall use letters $a_1$, $a_2$, $a_3$ for nonzero colors and letter $a_0$ for color zero.
Also, we use $R$ for edges of colored area.
Hence, the word $Ra_{u_1}a_{u_2}\dots a_{u_k}Q_iP_j a_{v_1}a_{v_2}\dots a_{v_l}R$ presents a full state of Turing machine.

We model head moving and cell painting using computations with powers of $a_i$ (cells) and $P_i$ and $Q_i$
(current cell and state of the machine's head).

\medskip


\section{Universal Turing machine}

We use the universal Turing machine constructed by Minsky. This machine is defined by the following instructions:

\medskip

$(0,0)\rightarrow (L,4,1) \ (0,1)\rightarrow (L,1,3) \ (0,2)\rightarrow (R,0,0) \ (0,3)\rightarrow (R,0,1)$

$(1,0)\rightarrow (L,1,2) \ (1,1)\rightarrow (L,1,3) \ (1,2)\rightarrow (R,0,0) \ (1,3)\rightarrow (L,1,3)$

$(2,0)\rightarrow (R,2,2) \ (2,1)\rightarrow (R,2,1) \ (2,2)\rightarrow (R,2,0) \ (2,3)\rightarrow (L,4,1)$

$(3,0)\rightarrow (R,3,2) \ (3,1)\rightarrow (R,3,1) \ (3,2)\rightarrow (R,3,0) \ (3,3)\rightarrow (L,4,0)$

$(4,0)\rightarrow (L,5,2) \ (4,1)\rightarrow (L,4,1) \ (4,2)\rightarrow (L,4,0) \ (4,3)\rightarrow $ STOP

$(5,0)\rightarrow (L,5,2) \ (5,1)\rightarrow (L,5,1) \ (5,2)\rightarrow (L,6,2) \ (5,3)\rightarrow (R,2,1)$

$(6,0)\rightarrow (R,0,3) \ (6,1)\rightarrow (R,6,3) \ (6,2)\rightarrow (R,6,2) \ (6,3)\rightarrow (R,3,1)$

\medskip

We use the following alphabet:
$$\{t, \, a_0, \dots  a_3, \, Q_0, \dots Q_6, \, P_0 \dots P_3, \,  R\}$$

For every pair except $(4,3)$ the following functions are defined:
$q(i,j)$ is a new state, $p(i,j)$ is a new color of the current cell (the head leaves it).

\medskip

\section{Defining relations for the nilpotency question}

Consider the following defining relations:

\begin{eqnarray}
   &
tRa_l=Rta_l;  \text{\ \  $0\leq l\leq 3$}                           \label{tt1}  \\ &
ta_lR=a_lRt;  \text{\ \  $0\leq l\leq 3$}                           \label{tt1b}  \\ &
ta_ka_j=a_kta_j;                           \quad \text{ $0\leq k,j\leq 3$}    \label{tt2} \\ &
ta_kQ_iP_j=Q_{q(i,j)}P_kta_{p(i,j)}; \text{for left pairs $(i,j)$ and $0\leq k\leq 3$} \label{tt3} \\ &
tRQ_iP_j=RQ_{q(i,j)}P_0ta_{p(i,j)}; \text{for left pairs $(i,j)$ and $0\leq k\leq 3$} \label{tt5} \\ &
ta_lQ_iP_ja_ka_n=a_la_{p(i,j)}Q_{q(i,j)}P_kta_n;\text{for right pairs $(i,j)$ and $0\leq k\leq 3$} \label{tt4} \\ &
ta_lQ_iP_ja_kR=a_la_{p(i,j)}Q_{q(i,j)}P_kRt;\text{for right pairs $(i,j)$ and $0\leq k\leq 3$} \label{tt4r} \\ &
tRQ_iP_ja_ka_n=Ra_{p(i,j)}Q_{q(i,j)}P_kta_n;\text{for right pairs $(i,j)$ and $0\leq k\leq 3$} \label{tt4b} \\ &
tRQ_iP_ja_kR=Ra_{p(i,j)}Q_{q(i,j)}P_kRt;\text{for right pairs $(i,j)$ and $0\leq k\leq 3$} \label{tt4ar} \\ &
ta_lQ_iP_jR=a_la_{p(i,j)}Q_{q(i,j)}P_0Rt; \text{for right pairs $(i,j)$ and $0\leq l\leq 3$} \label{tt6}  \\ &
tRQ_iP_jR=Ra_{p(i,j)}Q_{q(i,j)}P_0Rt; \text{for right pairs $(i,j)$} \label{tt6b}  \\ &
Q_4P_3=0.                                  \label{tt7}
\end{eqnarray}

\medskip

The relations \eqref{tt1} and \eqref{tt2} are used to move $t$ from the left edge to the last letter $a_l$ standing before 
$Q_iP_j$ which represent the head of the machine.  The relations \eqref{tt3}--\eqref{tt6b}
represent the computation process. The relation \eqref{tt1b} is used to move $t$ through the finishing letter $R$.

Finally, the relation \eqref{tt7} halts the machine.

\section{Nilpotency of the fixed word and machine halt}

Let us call the word $tRa_{u_1}a_{u_2}\dots a_{u_k}Q_iP_j a_{v_1}a_{v_2}\dots a_{v_l}R$ {\it the main word}.
The main goal is to prove the following theorem:

\begin{theorem} \label{th1}
The machine halts if and only if the main word is nilpotent in
the algebra presented by the defining relations \eqref{tt1}--\eqref{tt7}.
\end{theorem}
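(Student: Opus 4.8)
The plan is to show that left multiplication by $t$ faithfully implements one step of Minsky's machine, and then to read off nilpotency of the main word from the halting behaviour. Write the main word as $w = t\,C_0$, where $C_0 = R\,a_{u_1}\cdots a_{u_k}\,Q_{i}P_{j}\,a_{v_1}\cdots a_{v_l}\,R$ is the bounded configuration. The first step is a \emph{simulation lemma}: using \eqref{tt1} and \eqref{tt2} the letter $t$ migrates rightward from the left edge up to the head $Q_iP_j$; the computation relations \eqref{tt3}--\eqref{tt6b} then fire exactly once, rewriting the head according to the instruction $(i,j)$ (with the separate families handling left pairs, right pairs, and the boundary situations where the head is adjacent to an $R$ and a fresh zero cell must be created); finally \eqref{tt1b} carries $t$ across the right edge. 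One checks case by case that the net effect of one pass is $t\,C_0 \to C_1\, t$, where $C_1$ is precisely the configuration obtained from $C_0$ by one machine step, and that \eqref{tt7} applies --- producing $0$ --- if and only if the head has reached the halting pattern $Q_4P_3$. All relations except \eqref{tt7} are monomial-to-monomial, so the rewriting never introduces $0$ except through \eqref{tt7}.

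Next I would analyse $w^n = (t\,C_0)^n = t\,C_0\,t\,C_0\cdots t\,C_0$. Label the $n$ occurrences of $t$ as $t_1,\dots,t_n$ from the left, with $t_i$ sitting immediately before the $i$-th copy of $C_0$. Since every relation moves $t$ strictly to the right and $t$ crosses both $R$-edges freely, each $t_i$ sweeps through blocks $i, i+1,\dots,n$, performing one machine step on each block it passes, and the several $t$'s never block one another permanently: whenever two are adjacent the right one advances first and the left one follows (the system is confluent, so the normal form is independent of the order). Consequently the $j$-th block is traversed by exactly $t_1,\dots,t_j$ and therefore undergoes exactly $j$ machine steps, while all $n$ copies of $t$ accumulate at the far right. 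Thus, as long as \eqref{tt7} is never triggered, $w^n$ reduces to the single nonzero monomial
\[
C^{(1)}\,C^{(2)}\cdots C^{(n)}\, t^n ,
\]
where $C^{(j)}$ is the configuration reached from $C_0$ after $j$ machine steps; in particular $w^n \neq 0$.

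It then remains to match $0$ with halting. Suppose the machine halts after $m$ steps, i.e. $C^{(m)}$ contains the contiguous subword $Q_4P_3$. For every $n \ge m$ the last block is scheduled for $n \ge m$ steps, so during its processing the halting pattern appears and \eqref{tt7} forces the whole monomial to $0$; hence $w^n = 0$ and $w$ is nilpotent (indeed the nilpotency index is exactly $m$). Conversely, if the machine never halts then no $C^{(j)}$ ever contains $Q_4P_3$, relation \eqref{tt7} is never available, and by the previous paragraph $w^n$ reduces to a nonzero monomial for every $n$, so $w$ is not nilpotent. Combining the two implications yields the theorem.

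The routine but delicate part is the simulation lemma, where the families \eqref{tt3}--\eqref{tt6b} must be verified to cover every adjacency of the head with ordinary cells and with the edges, including the creation of new boundary cells. The genuine difficulty, however, is the bookkeeping for $w^n$: one must argue rigorously that the $n$ migrating copies of $t$ interleave without corrupting each other, and that the rewriting of $w^n$ is confluent and terminating for each fixed $n$ (which holds since each block receives at most $n$ rewrites), so that ``$w^n = 0$'' is detected unambiguously by the appearance of the pattern $Q_4P_3$ and by nothing else.
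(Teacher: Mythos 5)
You follow the same route as the paper: your simulation lemma is Proposition~\ref{ll1} (together with Lemma~\ref{wordend}), and your block-by-block bookkeeping for $w^n$ --- the $j$-th block undergoing exactly $j$ machine steps while the $t$'s accumulate on the right --- is exactly how the paper proves Proposition~\ref{ll3} and then the theorem. The direction ``halts $\Rightarrow$ nilpotent'' is fine as you state it, since it only requires exhibiting one chain of relations taking $w^n$ to $0$. The genuine gap is in the converse, and it is precisely the point you flag at the end but never resolve: you conclude that since $w^n$ reduces to the nonzero monomial $C^{(1)}\cdots C^{(n)}t^n$, we have $w^n\neq 0$ in the algebra. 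That inference is valid only if the oriented relations form a terminating \emph{and confluent} rewriting system (equivalently, a Gr\"obner basis), so that normal forms are unique and a nonzero normal form certifies a nonzero element; without this, $w^n$ could still lie in the ideal of relations even though your preferred reduction strategy never reaches $0$. Your sole justification --- ``which holds since each block receives at most $n$ rewrites'' --- addresses termination at best; it says nothing about confluence, and the earlier parenthetical ``the system is confluent, so the normal form is independent of the order'' is an assertion, not an argument.

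Closing this hole is where the paper does real, and not routine, work (Proposition~\ref{ll2}). One must produce a reduction order on the free monoid, compatible with two-sided multiplication, under which every relation is oriented so that $t$ moves rightward. No degree-first order can do this: relations \eqref{tt5}, \eqref{tt6}, \eqref{tt6b} strictly increase word length (a fresh blank cell is created at the tape edge), so any deglex-type order would orient them backwards, $t$ would move leftward under reduction, and the whole ``$t$ moves only to the right'' picture underlying your sweep would collapse. The paper's order compares first $\deg_t$, then the weighted height $h(X_0tX_1t\cdots tX_n)=\sum_i 2^i\deg X_i$, then deglex; under it all left-hand sides become the leading words, and since each leading word contains $t$ only as its first letter while $Q_4P_3$ (the pair $(4,3)$ being excluded from the left and right pairs) overlaps nothing, there are no ambiguities at all, and the diamond lemma applies with no compositions to check. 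Some argument of this kind is indispensable; as written, your proof establishes only one implication of the theorem.
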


First, we prove some propositions.
\begin{rem*}
 We use sign $\equiv$ for lexicographical equality and sign $=$ for equality in algebra.
\end{rem*}


\medskip

Consider a full state of our Turing machine represented by the word
$$Ra_{u_1}a_{u_2}\dots a_{u_k}Q_iP_j a_{v_1}a_{v_2}\dots a_{v_l}R.$$

Suppose that $U\equiv a_{u_1}a_{u_2}\dots a_{u_k}$  and $V\equiv a_{v_1}a_{v_2}\dots a_{v_l}$.
Therefore $U$ and $V$ represent the colors of all cells on the Turing machine tape.
We denote the full state of this machine as $M(i,j,U,V)$.
Suppose that $M(i',j',U',V')$ is the next state ($M(i,j,U,V)\rightarrow M(i',j',U',V')$).

\medskip

Consider a semigroup $G$ presented by the defining
relations \eqref{tt1}--\eqref{tt7}.
Suppose that $W(i,j,U,V)$ is a word in $G$ corresponding to machine state $M(i,j,U,V)$.
(Actually $W(i,j,U,V)\equiv Ra_{u_1}a_{u_2}\dots a_{u_k}Q_iP_j a_{v_1}a_{v_2}\dots a_{v_l}R$.)

\begin{proposition} \label{ll2}
Let us move all the words from the relations \eqref{tt1}--\eqref{tt7} to the left-hand side.
There exists a reduction order on the free monoid generated by alphabet  $\Phi=\{t, \, a_0, \dots,  a_3, \, Q_0, \dots, Q_6, \, P_0, \dots, P_3, \,  R\}$, 
such that 
the left-hand sides of the obtained equalities comprise a Gr\"obner basis in
the ideal generated by them. 
%
\end{proposition}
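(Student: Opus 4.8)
The plan is to exhibit a monomial (reduction) order for which every displayed relation has its left-hand side as leading term, and then to invoke the Composition--Diamond Lemma (Bergman's Diamond Lemma): once the order is fixed, it suffices to check that every overlap ambiguity and every inclusion ambiguity between the leading words reduces to zero. The whole point will be that, because of the combinatorial shape of the relations, there are in fact no nontrivial ambiguities at all, so confluence is automatic and the leading words form a Gr\"obner basis.

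To build the order I would separate each word $w$ into its \emph{skeleton} --- the subword obtained by deleting all colour letters $a_0,\dots,a_3$ --- and the remaining colour data. Order words first by comparing skeletons with the degree-lexicographic order in which $t$ is declared the largest skeleton letter, and break ties by the degree-lexicographic order on the full words (again with $t$ largest). This is a translation-invariant well-order, since the skeleton is a monoid homomorphism and degree-lexicographic orders are multiplicative well-orders. The key observation is that each of \eqref{tt1}--\eqref{tt6b} moves the unique letter $t$ from the front of the skeleton to its end while leaving the skeleton length unchanged: the extra colour letter created in the length-increasing relations \eqref{tt5}, \eqref{tt6}, \eqref{tt6b} does not enter the skeleton. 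Hence on skeletons these relations are length-preserving, and since the left-hand side is the one whose skeleton begins with the maximal letter $t$, it is lexicographically larger. Relation \eqref{tt2}, which does not change the skeleton at all, is oriented at the second level, and in \eqref{tt7} the left-hand side $Q_4P_3$ wins already on skeleton length. Thus each left-hand side is the leading monomial.

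It then remains to examine ambiguities, where I would use two structural facts. First, in every relation \eqref{tt1}--\eqref{tt6b} the letter $t$ occurs exactly once and as the initial letter; consequently no proper suffix of such a leading word can begin with $t$, so no proper overlap between two of these leading words is possible, and any inclusion of one inside another must be as a prefix. Second, each pair $(i,j)$ is exactly one of a left pair, a right pair, or the halting pair $(4,3)$, and these classes are disjoint. The only prefix inclusions that the shapes allow are of a left-pair left-hand side (from \eqref{tt3} or \eqref{tt5}) inside a right-pair left-hand side (from \eqref{tt4}, \eqref{tt4r}, \eqref{tt4b}, \eqref{tt4ar}, \eqref{tt6}, \eqref{tt6b}); these would force one and the same pair $(i,j)$ to be simultaneously left and right, which never happens, so they are vacuous. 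Likewise $Q_4P_3$ cannot overlap any $t$-relation, as no such word begins with $P_3$ or ends in a $Q$-letter, and cannot sit inside one, since the head $Q_iP_j$ occurring in a $t$-relation would have to be $Q_4P_3$, i.e. $(i,j)=(4,3)$, which is neither left nor right.

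The main work --- and the only place where something could go wrong --- is the bookkeeping in the previous two steps: one must confirm that the order is genuinely multiplicative and well-founded despite the length increases in \eqref{tt5}, \eqref{tt6}, \eqref{tt6b} (this is exactly what the skeleton device secures, since it removes the length growth from the dominating comparison), and one must be exhaustive in listing the finitely many shapes of leading words to be sure no overlap or inclusion has been overlooked. Once the disjointness of the left, right and halting pairs is used to discard the apparent prefix inclusions, every ambiguity is vacuous, local confluence is immediate, and the Composition--Diamond Lemma yields that the left-hand sides of \eqref{tt1}--\eqref{tt7} form a Gr\"obner basis.
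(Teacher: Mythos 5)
Your proof is correct, and it follows the same overall strategy as the paper --- exhibit a reduction order for which every left-hand side becomes the leading monomial, then observe that the leading words admit no overlap or inclusion ambiguities, so that confluence (the Diamond Lemma / Composition--Diamond Lemma) holds vacuously --- but the order you construct is genuinely different from the paper's. The paper compares words first by $\deg_t$, then by the \emph{height} $h(w)=\sum_{i} 2^i\deg X_i$, where $w=X_0tX_1t\cdots tX_n$ and each $X_i$ is $t$-free, and finally by deglex; the exponential weights make every letter ``cheaper'' once a $t$ has moved past it, and this is what absorbs the extra letter $a_{p(i,j)}$ created by the degree-increasing relations \eqref{tt5}, \eqref{tt6}, \eqref{tt6b}. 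Your skeleton device absorbs the same length increase in a different way: by deleting the colour letters $a_0,\dots,a_3$ before the dominant comparison, the degree growth becomes invisible, and each relation is oriented simply because its left-hand side is the one whose skeleton begins with the maximal letter $t$; ties (relation \eqref{tt2}, which fixes the skeleton) are settled at the second, full-word deglex level. Both devices yield a multiplicative well-order doing the required job; yours is arguably the more transparent for these particular relations, while the paper's height function is the more generic weighting trick. On the ambiguity side you are in fact more careful than the paper, which disposes of the matter in one sentence asserting the absence of overlaps; your explicit treatment of inclusions --- the observation that, since each leading word of \eqref{tt1}--\eqref{tt6b} contains $t$ exactly once and at the front, any inclusion must be a prefix inclusion, and that such an inclusion would force a single pair $(i,j)$ to be simultaneously a left pair and a right pair, or force the halting pair $(4,3)$ to occur in a left- or right-pair relation --- is precisely the check the paper leaves implicit.
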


\begin{proof}
Recall, that by reduction order on the free monoid $\Phi^*$ we mean a well order such that the empty word is the minimal one, 
and for any $a,b,s_1,s_2\in\Phi^*$, if $s_1\prec s_2$ then $as_1b\prec as_2b$.

Any word $w$ from $\Phi^*$ can be uniquely written as $X_0tX_1t\cdots tX_n$, where $X_i\in\Phi^*$ are free from the letter  $t$. 
Each $X_i$ can be empty, even all of them (if the word is $t^n$).
By {\it height} of this word we call 
$$h(w)=\sum\limits_{i=0}^n 2^i\deg X_i.$$

We define the following order. Given two words $w_1$ and $w_2$, we compare them with respect to the degree of $t$. 
If $\deg_t(w_1)<\deg_t(w_2)$ then $w_1\prec w_2$.
If $\deg_t(w_1)=\deg_t(w_2)$ then we compare them with respect to the height. 
If $h(w_1)<h(w_2)$ then $w_1\prec w_2$.
If their heights are also equal then we use a deglex order to compare them.

We need to prove that this order is a reduction order. 

Note that an empty word is the minimal (it has a zero degree of $t$, a zero height and a zero degree).

Assume $a,b,s_1,s_2\in\Phi^*$ and $s_1\prec s_2$.

If $\deg_t(s_1)<\deg_t(s_2)$ , then  $\deg_t(as_1b)<\deg_t(as_2b)$, therefore $as_1b\prec as_2b$.

Assume $\deg_t(s_1)=\deg_t(s_2)=n$ and $h(s_1)<h(s_2)$. 
In this case we will show that multiplication of inequality by one symbol does not change it.
In other words, we will show for any symbol $x\in\Phi$ that $xs_1\prec xs_2$ and $s_1x\prec s_2x$.
First assume that $x\neq t$. 
Then multiplication by $x$ from the left increases a height by $1$ of both sides, thus an inequality remains.
Note that multiplication by $x$ from the right increases a height by $2^n$ of both sides, and an inequality remains also.
The multiplication by $t$ from the left multiplies both heghts by $2$ and multiplication by $t$ from the right does not change it.

Now assume that  $\deg_t(s_1)=\deg_t(s_2)$ and $h(s_1)=h(s_2)$. 
Hence $\deg_t(as_1b)=\deg_t(as_2b)$ and $h(as_1b)=h(as_2b)$. 
In this case we compare both pairs $(s_1,s_2)$ and $(as_1b,as_2b)$ by deglex order which is a reduction order.

Note that every left-hand side contains a leading monomial.
There is no such word that begins some leading monomial in the basis and ends some other leading monomial.
\end{proof}

\begin{lemma}\label{wordend}
 For any nonempty word $U\equiv a_{i_1}\cdots a_{i_l}$ we have $tUR=URt$.
\end{lemma}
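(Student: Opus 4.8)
The plan is to slide $t$ from the left end of $UR$ to the right end one cell at a time, and I would set this up as an induction on the length $l$ of $U$. Only two of the defining relations are needed: \eqref{tt2}, which commutes $t$ past an interior letter $a_k$ as long as another cell-letter $a_j$ follows it, and \eqref{tt1b}, which carries $t$ across the final letter together with the closing edge $R$.

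For the base case $l=1$ the word $U$ is a single letter $a_{i_1}$, and the assertion $ta_{i_1}R=a_{i_1}Rt$ is precisely relation \eqref{tt1b}. For the inductive step I would write $U\equiv a_{i_1}U'$ with $U'\equiv a_{i_2}\cdots a_{i_l}$ a nonempty word (so $l\ge 2$). Since $U'$ begins with the cell-letter $a_{i_2}$, the prefix $ta_{i_1}a_{i_2}$ matches the left-hand side of \eqref{tt2}, so that $tUR=a_{i_1}\,(tU'R)$; applying the induction hypothesis to the shorter word $U'$ gives $tU'R=U'Rt$, and hence $tUR=a_{i_1}U'Rt=URt$.

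Unwinding the induction, the proof amounts to the telescoping chain
$$tUR=ta_{i_1}a_{i_2}\cdots a_{i_l}R=a_{i_1}ta_{i_2}\cdots a_{i_l}R=\cdots=a_{i_1}\cdots a_{i_{l-1}}ta_{i_l}R=URt,$$
in which the first $l-1$ steps each apply \eqref{tt2} and the last step applies \eqref{tt1b}. I expect no real difficulty here, since the argument is mechanical; the one point worth flagging is that \eqref{tt2} can be used only while a cell-letter remains to the right of $t$, so it cannot be invoked for the final crossing, where $t$ meets the boundary letter $R$. This is exactly what \eqref{tt1b} is for, and the hypothesis that $U$ is nonempty guarantees that the final step applies.
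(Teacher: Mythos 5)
Your proof is correct and is essentially the same as the paper's: the paper applies relation \eqref{tt2} a total of $(l-1)$ times to reach $a_{i_1}\cdots a_{i_{l-1}}ta_{i_l}R$ and then finishes with \eqref{tt1b}, which is exactly the telescoping chain your induction unwinds to.
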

\begin{proof}
We can use the relation \eqref{tt2} $(l-1)$ times and transform $tUR$ to $a_{i_1}\cdots a_{i_{l-1}}ta_{i_l}R$.
After that we use the relation \eqref{tt1b}.
\end{proof}

\begin{proposition} \label{ll1}
If $i=4$ and $j=3$ then $tW(i,j,U,V) = 0$. Otherwise, the following condition holds:  $tW(i,j,U,V) = W(i',j',U',V')t$.

\end{proposition}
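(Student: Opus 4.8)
The plan is to compute $tW(i,j,U,V)$ by a single left-to-right sweep of the letter $t$, applying the defining relations in the order in which they become applicable, and to check that the end result is exactly $W(i',j',U',V')t$ (or $0$ in the halting case). The halting case is immediate: when $i=4$, $j=3$ the word $W(4,3,U,V)\equiv RUQ_4P_3VR$ contains the subword $Q_4P_3$, so relation \eqref{tt7} already gives $W(4,3,U,V)=0$ and hence $tW(4,3,U,V)=0$. For every other pair $(i,j)$ I work with $W(i,j,U,V)\equiv RUQ_iP_jVR$ and track how $t$ migrates through it.

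First I would push $t$ rightward until it sits immediately to the left of the head block $Q_iP_j$. If $U$ is nonempty, relation \eqref{tt1} moves $t$ past the opening $R$ and then relation \eqref{tt2}, applied $k-1$ times, carries it through $U$, producing $Ra_{u_1}\cdots a_{u_{k-1}}\,ta_{u_k}Q_iP_jVR$; thus $t$ arrives in the pattern $ta_{u_k}Q_iP_j$. If $U$ is empty, I leave $t$ in front of $R$, in the pattern $tRQ_iP_j$. These are precisely the left contexts demanded by the computation relations.

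Next I apply the single computation relation matching the instruction $(i,j)$ and the local context. For a left pair I use \eqref{tt3} when $U\neq\emptyset$ and \eqref{tt5} when $U=\emptyset$; for a right pair I use \eqref{tt4}, \eqref{tt4r}, \eqref{tt6} when $U\neq\emptyset$ (according as $V$ has length $\geq 2$, length $1$, or $0$) and \eqref{tt4b}, \eqref{tt4ar}, \eqref{tt6b} when $U=\emptyset$. Each such relation rewrites $Q_iP_j$ into the updated head $Q_{q(i,j)}P_{\ast}$ with the newly written colour $a_{p(i,j)}$, and simultaneously expels $t$ to the right of the new head, leaving a tail of the form $ta_{\ast}\cdots R$ (or already $\cdots Rt$ in the boundary cases ending in $R$). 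Whenever the tail still has the shape $t(\text{nonempty word})R$, I finish by Lemma \ref{wordend}, which slides $t$ across the remaining cells and the closing $R$, yielding $W(i',j',U',V')t$.

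The substantive point, and the one needing the most care, is to verify that in every one of these subcases the produced word is literally the encoding $W(i',j',U',V')$ of the machine's successor state. For a left pair the head lands on the former last cell of $U$, so $j'=u_k$, $U'\equiv a_{u_1}\cdots a_{u_{k-1}}$, and $V'\equiv a_{p(i,j)}V$ (with $j'=0$ and a freshly created empty cell when $U=\emptyset$, via \eqref{tt5}); for a right pair the head lands on the former first cell of $V$, so $j'=v_1$, $U'\equiv Ua_{p(i,j)}$, and $V'\equiv a_{v_2}\cdots a_{v_l}$ (with $j'=0$ when $V=\emptyset$, via \eqref{tt6} or \eqref{tt6b}). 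Matching these against the boundary relations that introduce $P_0$ — i.e.\ confirming that the implicit extension of the tape by colour-$0$ cells agrees with what \eqref{tt5}, \eqref{tt6}, \eqref{tt6b} produce — is the only place where the argument is more than bookkeeping, so I would present the empty-$U$ and empty-$V$ cases explicitly and leave the generic interior case to the reader.
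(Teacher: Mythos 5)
Your proposal is correct and follows essentially the same route as the paper's own proof: push $t$ through $R$ and $U$ via \eqref{tt1} and \eqref{tt2}, apply the one computation relation dictated by whether $(i,j)$ is a left or right pair and by the emptiness of $U$ and $V$ (the same eight-way case split the paper uses), and finish with Lemma \ref{wordend} whenever the tail has the form $t(\text{nonempty word})R$. Your extra paragraph checking that the output literally encodes the successor state $W(i',j',U',V')$ is a point the paper leaves implicit, but it does not change the argument.
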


\begin{proof}

Consider the word $tW(i,j,U,V)=tRUQ_iP_jVR$. If $i=4$ and $j=3$ then we can apply relation \eqref{tt7}.
Otherwise, suppose that $(i,j)$ is a left pair.

If $U$ is not empty word, then we can write $U=\tilde Ua_k$ for some $0\leq k\leq 3$.
In this case we have the word $tR\tilde Ua_kQ_iP_jVR$. We can use the relation \eqref{tt1} to transform it to
$Rt\tilde Ua_kQ_iP_jVR$. Now we use the relation \eqref{tt2} the degree of $\tilde U$ times: our words transforms to
$R\tilde Uta_kQ_iP_jVR$. After that we use the relation \eqref{tt3} and our word transforms to
$R\tilde U Q_{i'}P_{j'}ta_{p(i,j)}VR$. Now we use Lemma \ref{wordend}.

If $U$ is empty, then $tW(i,j,U,V)\equiv tRQ_iP_jVR$.
In this case we will start our chain with using relation \eqref{tt5}:
$tRQ_iP_jVR=RQ_{i'}P_0ta_{p(i,j)}VR$. 
After that we use Lemma \ref{wordend}.
\medskip

Suppose that $(i,j)$ is a right pair. In this situation we will have six cases:

{\bf Case 1}
$U$ and $V$ are empty words.
In this case our word is $tRQ_iP_jR$ and we use the relation \eqref{tt6b}.

{\bf Case 2} $U$ is empty and $V=a_k$ is a word of degree $1$.
In this case our word is $tRQ_iP_ja_kR$, and we can use a relation \eqref{tt4ar}.

{\bf Case 3} $U$ is empty and $V=a_k\tilde V$ is a word of degree greater than $1$.
In this case our word is $tRQ_iP_ja_k\tilde V R$. We can use a relation \eqref{tt4b} to transform it to
$Ra_{p(i,j)}Q_{i'}P_{j'}t\tilde V R$, where $\tilde V$ is not empty. Thus we can use Lemma \ref{wordend} to complete the chain.

{\bf Case 4} $U=\tilde Ua_l$ is not empty and $V$ is empty.
In this case our word is $tR\tilde Ua_lQ_iP_jR$. We use a relation \eqref{tt1} and transform it to
$Rt\tilde Ua_lQ_iP_jR$. Using relation \eqref{tt2} the degree of $\tilde U$ times will transform our word to
$R\tilde U ta_lQ_iP_jR$. A relation \eqref{tt6} completes a chain.

{\bf Case 5} $U=\tilde Ua_l$ is not empty and $V=a_k$ is a word of degree $1$.
In this case our word is $tR\tilde Ua_lQ_iP_ja_kR$. Similar to Case 4 we can transform our word to
$R\tilde U ta_lQ_iP_ja_kR$. A relation \eqref{tt4r} completes a chain.

{\bf Case 6} $U=\tilde Ua_l$ is not empty and $V=a_k\tilde V$ is a word of degree greater than $1$.
In this case our word is $tR\tilde Ua_lQ_iP_ja_k\tilde VR$. Similar to Case 4 we can transform our word to
$R\tilde U ta_lQ_iP_ja_k\tilde VR$. A relation \eqref{tt4} transforms it to
$R\tilde U a_la_{p(i,j)}Q_{y'}P_{j'}t\tilde VR$. Now we use lemma \ref{wordend} to complete our chain.
\end{proof}

\medskip

\begin{proposition} \label{ll3}
The following statements are equivalent:

\begin{enumerate}
\item[(i)] The Turing machine described above begins with the state $M(i,j,U,V)$ and
halts in several steps.

\item[(ii)] There exists a positive integer $N$ such that
$t^N RUQ_iP_jVR= 0$.
\end{enumerate}

\end{proposition}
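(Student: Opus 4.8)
The plan is to upgrade the single-step identity of Proposition \ref{ll1} into a many-step statement by iteration, and then read off both implications from the two alternatives (vanishing versus a rightward shift of $t$) that Proposition \ref{ll1} supplies. Write $M_0 = M(i,j,U,V)$, let $M_0 \to M_1 \to M_2 \to \cdots$ be the run of the machine, and set $W_s := W(M_s)$ for the associated words, so that $W_0 \equiv RUQ_iP_jVR$. First I would prove by induction on $s$ that, as long as none of $M_0,\dots,M_{s-1}$ is the halting configuration $(4,3)$, the identity $t^s W_0 = W_s\, t^s$ holds in the semigroup algebra: the base case $s=0$ is trivial, and the inductive step applies the second alternative of Proposition \ref{ll1}, namely $tW_{s-1}=W_s t$, after left-multiplying the hypothesis $t^{s-1}W_0 = W_{s-1}t^{s-1}$ by $t$, since $t\,(W_{s-1}t^{s-1})=(tW_{s-1})\,t^{s-1}=W_s t^s$.

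For (i) $\Rightarrow$ (ii): if the machine halts, then after finitely many steps it reaches a configuration $M_k$ with state $4$ and colour $3$, while $M_0,\dots,M_{k-1}$ are non-halting. The iterated identity gives $t^k W_0 = W_k\,t^k$, and because $M_k$ is the halting configuration the first alternative of Proposition \ref{ll1} yields $tW_k = 0$. Hence $t^{k+1}W_0 = t\,(t^k W_0)=t\,(W_k t^k)=(tW_k)\,t^k=0$, so $N=k+1$ works.

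For (ii) $\Rightarrow$ (i) I would argue by contraposition: assume the machine never halts, so the run is infinite and no $M_s$ equals the configuration $(4,3)$. Then the iterated identity holds for every $N$, giving $t^N W_0 = W_N\, t^N$, and it suffices to show $W_N t^N \neq 0$ for all $N$. This is exactly where the Gr\"obner basis of Proposition \ref{ll2} enters: it is enough to check that the monomial $W_N t^N \equiv R\,U_N Q_{i_N} P_{j_N} V_N R\, t^N$ is irreducible, i.e.\ contains no leading monomial of the basis as a factor, for then it is a normal word and represents a nonzero element. Every leading monomial coming from \eqref{tt1}--\eqref{tt6b} begins with the letter $t$ and has a letter from $\{R,a_0,\dots,a_3\}$ immediately following that $t$; but in $W_N t^N$ the only occurrences of $t$ lie in the terminal block $t^N$, where each $t$ is followed by another $t$ or by the end of the word, so none of these factors can occur. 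The only remaining leading monomial is $Q_4P_3$ from \eqref{tt7}, and since $Q_{i_N}P_{j_N}$ is the unique place in $W_N t^N$ where a $Q$-letter meets a $P$-letter, it appears precisely when $(i_N,j_N)=(4,3)$, which is excluded. Thus $W_N t^N$ is irreducible and nonzero, so $t^N W_0 \neq 0$ for every $N$, contradicting (ii).

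The main obstacle is the nonvanishing claim $W_N t^N \neq 0$ in the reverse direction. The forward direction is a clean telescoping of Proposition \ref{ll1}, but excluding $t^N W_0 = 0$ for a non-halting run genuinely requires the Gr\"obner basis: both to know that the normal (irreducible) words form a linear basis and are therefore nonzero, and, via the no-overlap remark at the end of the proof of Proposition \ref{ll2}, to be certain that no reduction is triggered. The two points to verify carefully are that appending the block $t^N$ creates no new reducible factor, and that the absence of the halting configuration is exactly what prevents $Q_4P_3$ from appearing.
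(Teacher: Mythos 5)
Your proof is correct. The implication (i) $\Rightarrow$ (ii) is the same telescoping induction on machine steps via Proposition \ref{ll1} that the paper uses, but your argument for (ii) $\Rightarrow$ (i) takes a genuinely different route. The paper argues directly from $t^N RUQ_iP_jVR=0$: it analyzes a hypothetical chain of equalities ending in $0$, notes that only \eqref{tt7} can produce $0$, that every word in the chain has the structure $RU_kQ_{i_k}P_{j_k}V_kR$ with $t$'s inserted, and that, since reductions decrease words in the order of Proposition \ref{ll2}, the $t$'s move only rightward, each move being one machine step; the chain therefore exhibits a halting computation and even identifies the halting time $k\le N$ via $t^N RUQ_iP_jVR=t^{N-k}R\tilde U Q_4P_3\tilde V R t^k$. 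You instead argue by contraposition and compute a normal form: writing $W_s$ for the word of the $s$-th configuration as you do, a non-halting run gives $t^N W_0=W_N t^N$ for every $N$, and $W_N t^N$ is irreducible because every leading monomial of \eqref{tt1}--\eqref{tt6b} is a $t$ immediately followed by a letter of $\{R,a_0,\dots,a_3\}$ (so no such factor survives when all $t$'s sit in a terminal block), while the only other leading monomial $Q_4P_3$ is excluded precisely by non-halting; by the diamond lemma a normal word is nonzero, so (ii) fails. Your version buys rigor and economy: it replaces the paper's informal analysis of an arbitrary equality chain (including the step where one must rule out moving $t$ ``one step to the past'') with a single explicit irreducibility check, resting only on Propositions \ref{ll1}, \ref{ll2} and Bergman's theorem that normal words form a linear basis. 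The paper's version buys a sharper structural picture --- it locates the halting configuration inside any vanishing expression --- and it is essentially this picture that is reused later in the proof of Theorem \ref{th1}, so the two arguments are complementary rather than interchangeable in the context of the whole paper.
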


\begin{proof}
First, prove that second statement is a consequence of the first one.

Suppose that $M(i,j,U,V)$ transforms to $M(4,3,U',V')$ in one step. According to
Proposition~\ref{ll1} $tW(i,j,U,V) = W(4,3,U',V')t$.
Then we can apply $Q_4P_3=0$ by \eqref{tt7} and obtain zero.

\medskip
Suppose that the statement is true for $m$ (and fewer) steps.
Let the machine begin with
state $M(i,j,k,n)$ and halt after $m+1$ step.
Consider the first step in the chain.
Let it be the step from  $M(i,j,U,V)$ to $M(i',j',U',V')$.
Apply Proposition \ref{ll1} for this step. Hence
$tRUQ_iP_jVR = RU'Q_{i'}P_{j'}V'Rt.$

The machine started in the state $M(i',j',U',V')$ halts in $m$ steps. Using induction we complete the proof.

\medskip

Now let us prove that the first statement is a consequence of the second one.




If $t^N RUQ_iP_jVR= 0$, then there exists a chain of equivalent words, starting with $t^N RUQ_iP_jVR$ and finishing with $0$.
The only way to obtain $0$ is to use a relation $Q_4P_3=0$. Therefore the word before $0$ in the chain contains $Q_4P_3$.

By {\it structure of the word $W$}, $S(W)$ let us denote the word $W$, where all letters $t$ will be deleted.
Each word in the chain will have a structure $RU_kQ_{i_k}P_{j_k}V_kR$ because the only relation that breaks this structure is 
$Q_4P_3=0$, and it will be used only one time, in the end of the chain. Note that each structure corresponds to the Turing machine.
The only way to obtain $0$ in this chain is to change indices of $Q$ and $P$ in the structure. This can be done by moving $t$. 

According to the Proposition \ref{ll1}, moving $t$ from the left to the right corresponds to the Turing Machine's one step 
to the future, 
and moving $t$ from the right to the left corresponds to the Turing Machine's one step to the past 
(note that this is not always possible). 
There is a Gr\"obner basis of relations in our algebra, thus we can assume that in our chain  words decrease
(each word is lower than the previous with respect to the order on the free monoid $\Phi^*$). 
Therefore letters $t$ move only from the left to the right.


Hence there exists $k\leq N$ such that $t^N RUQ_iP_jVR=t^{N-k}R\tilde U Q_4P_3\tilde VRt^k$.

Therefore the machine halts after $k$ steps.

\end{proof}

Now we are ready to prove the theorem above.

\begin{theorem} \label{th1}
Consider an algebra $A$ presented by the defining
relations \eqref{tt1}--\eqref{tt7}.
The word $tRUQ_iP_jVR$ is nilpotent in $A$ if and only if machine $M(i,j,U,V)$ halts.
\end{theorem}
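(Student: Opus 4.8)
The plan is to reduce the theorem to the already-established Propositions \ref{ll1} and \ref{ll3} together with the Gr\"obner basis property of Proposition \ref{ll2}. Write $w\equiv tW(i,j,U,V)$ for the main word; for $s\ge 0$ let $M_s$ be the configuration reached after $s$ steps from $M(i,j,U,V)$, with word $W_s$, so that $W_0\equiv W(i,j,U,V)$ and $w\equiv tW_0$. Denote by $(i_s,j_s)$ the state--color pair of $M_s$, and let $s^*\le\infty$ be the first $s$ with $(i_s,j_s)=(4,3)$. First I would record the single computation that drives everything: for $1\le n\le s^*$,
\[
w^n = W_1 W_2\cdots W_n\, t^n .
\]
This is proved by induction on $n$. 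The base case $n=1$ is exactly the non-halting clause of Proposition \ref{ll1}, which applies because $M_0$ is not the STOP configuration when $s^*\ge 1$ (if $s^*=0$ then $W_0$ contains $Q_4P_3$, so $w=0$ by \eqref{tt7} and $w$ is trivially nilpotent). For the inductive step I use $w^n=w^{n-1}w=(W_1\cdots W_{n-1}t^{n-1})(tW_0)=W_1\cdots W_{n-1}\,(t^nW_0)$, and then push the block $t^n$ to the right by applying Proposition \ref{ll1} $n$ times to obtain $t^nW_0=W_n\,t^n$; this is legitimate precisely because none of $M_0,\dots,M_{n-1}$ is the STOP configuration, i.e.\ because $n\le s^*$.

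Granting this identity, both implications follow quickly. If the machine halts, then $s^*<\infty$ and $W_{s^*}$ contains the subword $Q_4P_3$, so $W_{s^*}=0$ by \eqref{tt7}; hence $w^{s^*}=W_1\cdots W_{s^*}t^{s^*}=0$, and $w$ is nilpotent. Conversely, suppose the machine does not halt, so $s^*=\infty$ and the identity holds for every $n$. Here I must show $w^n=W_1\cdots W_n\,t^n\neq 0$ for all $n$, which is where Proposition \ref{ll2} enters: since a finite Gr\"obner basis yields confluence, the reduced words form a $\Bbb K$-basis of $A$, so it suffices to check that $W_1\cdots W_n\,t^n$ is a reduced and nonzero word. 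Every leading monomial of the basis either begins with $t$ or equals $Q_4P_3$. In $W_1\cdots W_n\,t^n$ the only occurrences of $t$ lie in the trailing block $t^n$, and each such $t$ is followed only by another $t$ or by the end of the word, whereas the second letter of every $t$-leading monomial lies in $\{R,a_0,\dots,a_3\}$; thus no $t$-leading monomial occurs. Likewise $Q_4P_3$ cannot occur, because each block $W_s\equiv RU_sQ_{i_s}P_{j_s}V_sR$ has $(i_s,j_s)\neq(4,3)$ and the block junctions are $RR$. Hence $w^n$ is a nonzero reduced word for all $n$, so $w$ is not nilpotent.

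I expect the converse (non-halting) direction to be the main obstacle. The identity $w^n=W_1\cdots W_n t^n$ is an equality in $A$, but concluding $w^n\neq 0$ genuinely requires the confluence of Proposition \ref{ll2}, since the only relation capable of annihilating a word is \eqref{tt7}; one must verify with care that neither the concatenation junctions between the blocks nor the trailing $t^n$ accidentally creates an occurrence of a leading monomial. An alternative packaging of the \emph{forward} direction is to quote Proposition \ref{ll3}: halting produces an $N$ with $t^NW_0=0$, and the identity rewrites $w^{s^*}=(W_1\cdots W_{s^*-1})\,(t^{s^*}W_0)=0$. But this shortcut does not help in the other direction, because left multiplication by $W_1\cdots W_{s^*-1}$ need not reflect nonvanishing, so the explicit reducedness check above remains essential for the implication ``nilpotent $\Rightarrow$ halts''.
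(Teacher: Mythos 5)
Your proof is correct, and it departs from the paper's in exactly one of the two directions. For ``halts $\Rightarrow$ nilpotent'' you do essentially what the paper does: the paper also uses Proposition \ref{ll1} repeatedly to write $(tW_0)^n = A\,t^nW_0$ (its word $A$ is your $W_1\cdots W_{n-1}$) and then kills $t^nW_0$ by quoting Proposition \ref{ll3}, whereas you bypass Proposition \ref{ll3} by observing directly that $W_{s^*}$ contains $Q_4P_3$ and hence vanishes by \eqref{tt7}. The genuine difference is in ``nilpotent $\Rightarrow$ halts''. The paper argues head-on about a rewriting chain from $(tW_0)^n$ to $0$: the structure of the word is a row of $n$ identical machines, the only annihilating relation is \eqref{tt7}, and since words along a chain may be taken decreasing in the Gr\"obner order, the letters $t$ only move rightwards, so some factor --- a copy of $M(i,j,U,V)$ --- must reach $Q_4P_3$. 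You instead prove the contrapositive: if the machine never halts, your identity $w^n = W_1\cdots W_n\,t^n$ holds for every $n$, and you check this word is irreducible with respect to the Gr\"obner basis of Proposition \ref{ll2} (every leading monomial either begins with $t$ followed by a letter of $\{R,a_0,\dots,a_3\}$ or equals $Q_4P_3$, while in your word every $t$ is trailing, every block has $(i_s,j_s)\neq(4,3)$, and the junctions are $RR$), so by the diamond lemma it is a nonzero element of the monomial $\mathbb{K}$-basis of $A$ and $w^n\neq 0$. What each approach buys: the paper's version reuses the ``structure/decreasing chain'' machinery already set up for Proposition \ref{ll3}, but it leaves informal precisely the delicate points (why a $Q_4P_3$ in one factor forces the original machine to halt, and why rewriting may be assumed monotone); your version is more self-contained and rigorous, turning nonvanishing into an explicit normal-form computation plus confluence, at the modest cost of the reducedness check, which you carry out correctly.
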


\begin{proof}
Suppose that $(tRUQ_iP_jVR)^n = 0$.
The structure of this word corresponds to a row of $n$ separate machines. Using relations we can transform some machine to the next
state (note that we have a Gr\"obner basis in the algebra, therefore we can assume that words in the chain will decrease).
Thus if we obtain $Q_4P_3$ for some machine, we can conclude that this machine halts after several steps. 
Therefore $M(i,j,U,V)$ halts.

\medskip

Suppose that $M(i,j,U,V)$ halts. Then $t^n RUQ_iP_jVR= 0$ for some minimal $n$.
We can obtain $(tRUQ_iP_jVR)^n= At^nRUQ_iP_jVR$ (for some word $A$) by using Proposition \ref{ll1} several times.
Therefore $(tRUQ_iP_jVR)^n = 0$.
\end{proof}

\medskip

Since the halting problem cannot be algorithmically solved, the nilpotency problem in algebra $A$ is algorithmically unsolvable.

\medskip

\section{Defining relations for a zero divisors question}

We use the following alphabet:
$$\Psi=\{t, \, s, \, a_0, \dots  a_3, \, Q_0, \dots Q_6, \, P_0 \dots P_3, \,  L, \,  R \}.$$

For every pair except $(4,3)$ the following functions are defined:
$q(i,j)$ is a new state, $p(i,j)$ is a new color of the current cell (the head leaves it).

\medskip

Consider the following defining relations:

\begin{eqnarray}
   &
tLa_k=Lta_k; \text{ $0\leq k\leq 3$}                             \label{td1}  \\ &
ta_ka_l=a_kta_l;                           \quad \text{ $0\leq k,l\leq 3$}    \label{td2} \\ &
sR=Rs;         \label{td9}  \\ &
sa_k=a_ks;    \text{ $0\leq k\leq 3$}  \label{td8} \\ &
ta_kQ_iP_j=Q_{q(i,j)}P_ka_{p(i,j)}s; \text{for left pairs $(i,j)$ and $0\leq k\leq 3$} \label{td3} \\ &
tLQ_iP_j=LQ_{q(i,j)}P_0a_{p(i,j)}s; \text{for left pairs $(i,j)$} \label{td5} \\ &
ta_lQ_iP_ja_k=a_la_{p(i,j)}Q_{q(i,j)}P_ks;\text{for right pairs $(i,j)$ and $0\leq k,l\leq 3$} \label{td4} \\ &
tLQ_iP_ja_k=La_{p(i,j)}Q_{q(i,j)}P_ks;\text{for right pairs $(i,j)$ and $0\leq k\leq 3$} \label{td4b} \\ &
ta_lQ_iP_jR=a_la_{p(i,j)}Q_{q(i,j)}P_0Rs; \text{for right pairs $(i,j)$ and  $0\leq l\leq 3$ } \label{td6}  \\ &
tLQ_iP_jR=La_{p(i,j)}Q_{q(i,j)}P_0Rs; \text{for right pairs $(i,j)$} \label{td6b}  \\ &
Q_4P_3=0;                                  \label{td7}
\end{eqnarray}

\medskip

The relations \eqref{td1}--\eqref{td2} are used to move $t$ from the left edge to the letters
$Q_i$, $P_j$ which present the head of the machine.
The relations \eqref{td9}--\eqref{td8} are used to move $s$ from the letter $Q_i$, $P_j$ to the right edge.
The relations \eqref{td3}--\eqref{td6} represent the computation process. Here we use relations of the form $tU=Vs$.

Finally, the relation \eqref{td7} halts the machine.

\section{Zero divisors and machine halt}

Let us call the word $La_{u_1}a_{u_2}\dots a_{u_k}Q_iP_j a_{v_1}a_{v_2}\dots a_{v_l}R$ {\it the main word}.
The main goal is to prove the following theorem:

\begin{theorem} \label{th2}
The machine halts if and only if the main word is a zero divisor in
the algebra presented by the defining relations \eqref{td1}--\eqref{td7}.
\end{theorem}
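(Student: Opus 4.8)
The plan is to mirror, step for step, the development that established Theorem~\ref{th1}, and then to convert the resulting ``halting $\Leftrightarrow$ annihilation by a power of $t$'' statement into the zero-divisor assertion. Write $W_0:=LUQ_iP_jVR$ for the main word and, for $p\ge 0$, let $W_p:=W(i_p,j_p,U_p,V_p)$ be the word attached to the configuration reached after $p$ computation steps (defined as long as the machine has not yet halted). First I would establish three analogues of the preparatory results. (a) A Gr\"obner-basis statement in the spirit of Proposition~\ref{ll2}: moving every relation \eqref{td1}--\eqref{td7} to the left, the leading monomials form a Gr\"obner basis for a reduction order that measures how far to the left the letters $t$ and $s$ sit (for instance, summing over all occurrences of $t$ and $s$ the number of remaining letters to their right, with a deglex tie-break). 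Every relation except \eqref{td7} pushes a $t$ or an $s$ strictly to the right, so the left-hand sides are the leading terms, and --- exactly as in Proposition~\ref{ll2} --- no leading monomial overlaps another, whence the basis property. (b) A single-step simulation lemma, the analogue of Proposition~\ref{ll1}: for $(i,j)\neq(4,3)$ one has $t\,W(i,j,U,V)=W(i',j',U',V')\,s$, where $M(i,j,U,V)\to M(i',j',U',V')$; the proof runs through the same case distinction (left/right pair, $U$ and/or $V$ empty), using \eqref{td1}--\eqref{td2} to drive $t$ rightward to the head, the appropriate relation \eqref{td3}--\eqref{td6b} to execute the step and emit an $s$, and finally a companion of Lemma~\ref{wordend} --- namely $sVR=VRs$, proved by \eqref{td8} then \eqref{td9} --- to slide that $s$ out past the right edge. (c) By iterating (b) and invoking confluence from (a), $t^{p}W_0=W_p\,s^{p}$ whenever the machine survives $p$ steps, while a step into the halting state produces $Q_4P_3$ and hence $0$; thus, as in Proposition~\ref{ll3}, the machine halts if and only if $t^{N}W_0=0$ for some $N$.

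Granting (a)--(c), the implication ``halts $\Rightarrow$ zero divisor'' is immediate: if the machine halts in $N$ steps then $t^{N}W_0=0$, and since $t^{N}$ is irreducible (no leading monomial is a pure power of $t$) we have $t^{N}\neq 0$, while $W_0\neq 0$ because $(i,j)\neq(4,3)$ makes $W_0$ itself irreducible; hence $W_0$ is annihilated on the left by a nonzero element and is a zero divisor.

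For the converse I would argue the contrapositive: assuming the machine never halts, I would show $W_0$ is neither a right nor a left zero divisor. The right case is settled by a barrier observation: in every leading monomial containing $R$ (these are $sR$, $ta_lQ_iP_jR$, $tLQ_iP_jR$) the letter $R$ occurs last, so no leading monomial has $R$ in its interior. Consequently, in a product $W_0 g$ no reduction can cross the terminal $R$ of $W_0$, the reduced form of $W_0 g$ is just $W_0$ concatenated with the reduced form of $g$, and $W_0 g\neq 0$ for every $g\neq 0$. For the left case I would analyse $mW_0$ for a single normal-form monomial $m$. Writing $m=m'\,t^{p}$ with $m'$ not ending in $t$, the prefix $m'$ is inert against $W_0$ (a reduction can enter $W_0$ only through the pattern $tL\ldots$, i.e.\ through a $t$ immediately to the left of the initial $L$), so $mW_0=m'\,(t^{p}W_0)=m'\,W_p\,s^{p}$ by (c). Because the machine does not halt, $W_p\neq 0$ for all $p$, the word $m'W_p s^{p}$ is already reduced, and the resulting map $m\mapsto mW_0$ is \emph{injective} on normal-form monomials: from $m'W_p s^{p}$ one recovers $p$ as the number of trailing $s$'s, then $W_p$ as the (determined) configuration after $p$ steps, then $m'$ by deleting the known suffix $W_p$, and finally $m=m'\,t^{p}$. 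Since distinct monomials go to distinct monomials and none to $0$, any $g=\sum c_m m$ with $gW_0=0$ must have all $c_m=0$; hence $W_0$ has no nonzero left annihilator either.

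The hard part is exactly the structural lemma of the previous paragraph --- identifying $mW_0$ with the single monomial $m'W_p s^{p}$ and proving injectivity --- since it is what forces a genuine machine computation to underlie any annihilation. Its proof rests on two points that must be made precise: that the only reductions created in $mW_0$ are those by which the trailing $t$'s of $m$ enter $W_0$ and drive its head (internal $t$'s of $m$ stay frozen because $m$ is already reduced, and the emitted $s$'s, governed only by the inert commutations \eqref{td9}--\eqref{td8}, can never recreate $Q_4P_3$), and that confluence lets us read off the normal form as $m'W_p s^{p}$ irrespective of the order of reductions. This is also the place where the asymmetry between $t$ and $s$ is essential: because $t$ only ever moves to the right and is consumed into an inert $s$, the simulation is strictly one-way, so a non-halting machine admits no finite certificate of annihilation --- which is precisely what one cannot guarantee if the single letter $t$ of the nilpotency construction were reused.
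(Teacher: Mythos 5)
Your overall architecture is sound, and for the hard direction it is genuinely different from the paper's. The forward implication is the same (halting gives $t^NW_0=0$ via iterated one-step simulation, and $t^N\neq 0$ because no leading monomial is a power of $t$). For the converse, the paper argues directly from $XW_0Y=0$: for monomial $X,Y$ it tracks where $Q_4P_3$ can appear, using Propositions \ref{ld4} and \ref{ld5} to exclude the regions outside $L\cdots R$, and then it needs a separate, rather delicate argument (the invariant $\tilde h=\deg_t+\deg_s$, minimality of the number of terms, grouping into sets of similar words) to handle general linear combinations. You instead prove the contrapositive by exhibiting normal forms: if the machine never halts, then for a reduced monomial $m=m't^p$ (with $m'$ not ending in $t$) the normal form of $mW_0$ is $m'W_ps^p$, the map $m\mapsto m'W_ps^p$ is injective and never zero, so no $g\neq 0$ satisfies $gW_0=0$; and your $R$-barrier observation (every leading monomial containing $R$ ends in $R$) shows $W_0g$ is already reduced, killing right annihilators. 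Both routes rest on the same two pillars --- the one-step lemma $tW(i,j,U,V)=W(i',j',U',V')s$, which is exactly Proposition \ref{ld1}, and unique normal forms from the Gr\"obner property --- but your injectivity argument disposes of arbitrary sums in one stroke, replacing the most laborious part of the paper's proof, while the paper's version yields the formally stronger two-sided statement that $XW_0Y=0$ with $X,Y\neq 0$ already forces halting.

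The one concrete defect is the reduction order you propose in (a). The count ``sum over all occurrences of $t$ and $s$ of the number of letters to their right'' is not stable under multiplication, because the relations \eqref{td5}, \eqref{td6}, \eqref{td6b} lengthen the word. Concretely, for \eqref{td5} your count correctly gives $tLQ_iP_j\succ LQ_{q(i,j)}P_0a_{p(i,j)}s$ (count $3$ versus $0$); but after multiplying both sides on the left by $t^4$, the counts become $7+6+5+4+3=25$ for $t^5LQ_iP_j$ and $9+8+7+6+0=30$ for $t^4LQ_{q(i,j)}P_0a_{p(i,j)}s$, so the inequality reverses. An order that is not invariant under multiplication cannot be fed into the diamond lemma, and everything downstream of (a) --- termination, confluence, uniqueness of the normal forms you compute --- depends on it. The repair is exactly what the paper does in Proposition \ref{ld2}: weighted deglex in which $t$ has weight $2$, every other letter weight $1$, and $t$ is greatest in the lex tie-break. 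Under that order each left-hand side of \eqref{td1}--\eqref{td7} is the leading monomial (the weight strictly drops, or it is preserved and the lex comparison decides), your no-overlap check is unchanged, and the rest of your argument goes through as written.
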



\medskip

Consider a full state of our Turing machine represented by the word
$$La_{u_1}a_{u_2}\dots a_{u_k}Q_iP_j a_{v_1}a_{v_2}\dots a_{v_l}R.$$

Suppose that $U=a_{u_1}a_{u_2}\dots a_{u_k}$  and $V=a_{v_1}a_{v_2}\dots a_{v_l}$.
Therefore $U$ and $V$ represent the colors of all cells on the Turing machine tape.
We denote the full state of this machine as $T(i,j,U,V)$.
Suppose that $T(i',j',U',V')$ is the next state ($T(i,j,U,V)\rightarrow T(i',j',U',V')$).

\medskip

Consider a semigroup $S$ presented by the defining
relations \eqref{td1}--\eqref{td7}.
Suppose that $F(i,j,U,V)$ is a word in $S$ corresponding to machine state $T(i,j,U,V)$.

\begin{proposition} \label{ld2}
Let us move all the words from relations \eqref{td1}--\eqref{td7} to the left-hand side.
Consider the semi-DEGLEX order: $\{t, \, s, \, a_0, \dots  a_3, \, Q_0, \dots, Q_6, \, P_0, \dots, P_3, \,  L, \,  R\}.$ 
The left-hand sides of the obtained equalities comprise a Gr\"obner basis in
the ideal generated by them.
%
\end{proposition}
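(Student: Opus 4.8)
The plan is to invoke the diamond/composition lemma for associative algebras: once the semi-DEGLEX order is known to be a reduction order and the left-hand side of each relation is known to be its leading monomial, it suffices to verify that every \emph{ambiguity} (overlap or inclusion) between leading monomials resolves to zero. As in the closing remark of Proposition~\ref{ll2}, I expect to prove the sharper statement that there are \emph{no} nontrivial ambiguities whatsoever, so that the composition condition holds vacuously and the family is automatically a Gr\"obner basis.

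First I would pin down the leading monomials, which is the step that needs care. The relations are not degree-preserving: in \eqref{td5}, \eqref{td6} and \eqref{td6b} the head steps off an edge and a new cell is created, so the right-hand side is one letter longer than the left-hand side. A naive degree-first deglex would therefore orient these the wrong way. This is exactly why the order must read the markers $t$ and $s$ first: compare words by $\deg_t$, then by $\deg_s$, then by ordinary deglex with $t,s$ the largest letters. Every relation of the shape $tU=Vs$, namely \eqref{td3}--\eqref{td6b}, then has $\deg_t=1$ on the left and $\deg_t=0$ on the right, so the left-hand side leads irrespective of length; the relations \eqref{td1}--\eqref{td2} tie on $\deg_t$ and \eqref{td9}--\eqref{td8} tie on both $\deg_t$ and $\deg_s$, and are oriented correctly because $t$ (respectively $s$) is lexicographically largest, so displacing it rightward strictly lowers the word; finally \eqref{td7} is a monomial relation. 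Confirming that this order is admissible (well-ordered, empty word minimal, multiplicative on both sides) is routine and in fact shorter than the analogous check in Proposition~\ref{ll2}, since no auxiliary height function is required here.

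The core of the proof is the combinatorial claim that no leading monomial overlaps or is contained in another. I would sort the leading monomials into three types: the $t$-monomials (from \eqref{td1}--\eqref{td6b}), each beginning with $t$ and containing $t$ \emph{only} in its first position; the $s$-monomials $sR$ and $sa_k$ (from \eqref{td9}--\eqref{td8}), each beginning with $s$ and containing no $t$; and the single monomial $Q_4P_3$ of \eqref{td7}, containing neither marker. An overlap would require a nonempty proper suffix of one leading monomial to coincide with a nonempty proper prefix of another. But a proper prefix of a $t$-monomial begins with $t$, whereas every proper suffix of every leading monomial is $t$-free (since $t$, when present, sits only at the first letter); likewise a prefix of an $s$-monomial begins with $s$, which no proper suffix contains; and $Q_4P_3$ has only the candidate prefix $Q_4$, which never occurs as a terminal segment of another monomial. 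The same bookkeeping excludes inclusions: a $t$-monomial embedded in a longer one would have to start at the unique $t$, hence be a prefix, and comparing letters position by position this forces either an $a$-letter to equal a $Q$- or $L$-letter, or a left-pair block $Q_iP_j$ to equal a right-pair block — both impossible, as the left- and right-pair index sets are disjoint; the two $s$-monomials have distinct second letters and no $t$-monomial contains $s$, so neither embeds in anything; and $Q_4P_3$ cannot sit inside any $t$-monomial precisely because $(4,3)$ is neither a left nor a right pair, so the block $Q_iP_j$ occurring there is never $Q_4P_3$.

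With all overlap and inclusion ambiguities ruled out, the composition condition is satisfied trivially, and by the diamond lemma the left-hand sides form a Gr\"obner basis of the ideal they generate. I expect the main obstacle to lie not in the no-ambiguity count, which is a finite mechanical check once the ``markers only at the left'' structure is isolated, but in the first step: verifying that the semi-DEGLEX order is genuinely admissible and genuinely selects each left-hand side as leading monomial, in particular across the three length-increasing relations, where the correct orientation is forced by the priority of $t$ over raw word length rather than by total degree.
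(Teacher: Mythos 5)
Your argument is correct, and its skeleton is exactly the paper's: exhibit a reduction order under which every left-hand side is the leading monomial, show that the composition condition holds vacuously because no leading monomial overlaps with or sits inside another, and invoke the diamond lemma. The one genuine difference is the order itself. The paper reads ``semi-DEGLEX'' as a \emph{weighted} deglex: every letter gets weight $1$ except $t$, which gets weight $2$, with ties broken lexicographically ($t$ largest). Under that weighting the computation relations \eqref{td3}, \eqref{td4}, \eqref{td4b} are oriented by a strict drop in weighted degree, while the length-increasing relations \eqref{td5}, \eqref{td6}, \eqref{td6b} and the commutations \eqref{td1}, \eqref{td2}, \eqref{td9}, \eqref{td8} come out as weighted-degree ties settled by the lexicographic priority of $t$ (resp.\ $s$). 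Your layered order ($\deg_t$ first, then $\deg_s$, then deglex) is a different admissible order achieving the same orientation: every relation of shape $tU=Vs$ drops strictly in $\deg_t$, and only the commutations need a lex tie-break; it is essentially the order of Proposition \ref{ll2} with $\deg_s$ playing the role of the height function. Since both orders are admissible and select the same set of leading monomials, the resulting Gr\"obner basis and all downstream uses (decreasing reduction chains in Proposition \ref{ld3}, the impossibility of converting $s$ back into $t$) are identical, so your substitution is harmless. What your writeup adds beyond the paper is the explicit three-type classification of leading monomials ($t$-monomials with $t$ only in first position, the $s$-monomials, and $Q_4P_3$) and a verification that covers both overlap \emph{and} inclusion ambiguities --- the paper compresses all of this into the single assertion that no word begins one leading monomial and ends another, which, read literally, addresses only overlaps, while the inclusion case (in particular that $Q_4P_3$ never occurs inside a $Q_iP_j$ block, since $(4,3)$ is neither a left nor a right pair) is also needed for the diamond lemma.
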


\begin{proof}
We will use a weighted degree instead of the usual: each letter from the alphabet (except for $t$) 
will have degree $1$, however the degree of $t$ equals $2$. (For example, $\deg(tRL)=4$)

This order is a reduction order.

Note that every left-hand side contains a leading monomial.
There is no such word that begins some leading monomial in the basis and ends some other leading monomial.
\end{proof}

\medskip

\begin{proposition} \label{ld1}
If $i=4$ and $j=3$ then $tF(i,j,U,V) = 0$. Otherwise, the following condition holds:  $tF(i,j,U,V) = F(i',j',U',V')s$.

\end{proposition}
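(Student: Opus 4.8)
The plan is to mirror the proof of Proposition~\ref{ll1}, replacing the carried copy of $t$ by the freshly produced letter $s$. Writing $F(i,j,U,V)\equiv LUQ_iP_jVR$, I would first dispose of the halting case: if $(i,j)=(4,3)$ then $tLUQ_4P_3VR$ contains the subword $Q_4P_3$, so relation~\eqref{td7} annihilates the whole word and $tF(4,3,U,V)=0$, with no letter movement required. Note that $(4,3)$ is the STOP pair, hence is neither a left nor a right pair, so no computation relation competes with \eqref{td7} here.

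For $(i,j)\neq(4,3)$ I would push $t$ to the head, fire the matching computation relation, and then shepherd the emitted $s$ to the right edge. If $(i,j)$ is a left pair and $U\equiv\tilde Ua_k$ is nonempty, one application of \eqref{td1} followed by $\deg\tilde U$ applications of \eqref{td2} yields $L\tilde U\,ta_kQ_iP_jVR$, and \eqref{td3} then gives $L\tilde U\,Q_{q(i,j)}P_ka_{p(i,j)}\,s\,VR$; if $U$ is empty, \eqref{td5} gives $LQ_{q(i,j)}P_0a_{p(i,j)}\,s\,VR$ directly. In either case $s$ now stands to the left of $VR$, and I would commute it rightwards by \eqref{td8} (once per letter of $V$) and \eqref{td9} (past $R$), landing at $F(i',j',U',V')\,s$.

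For right pairs I would run four boundary cases by emptiness of $U,V$: both empty uses \eqref{td6b}; $U$ empty and $V\equiv a_k\tilde V$ uses \eqref{td4b}; $U\equiv\tilde Ua_l$ nonempty and $V$ empty uses \eqref{td6} (after bringing $t$ to the head via \eqref{td1},\eqref{td2}); and both nonempty uses \eqref{td4}. Each relation emits $s$ immediately after the new head, and \eqref{td8}--\eqref{td9} again carry it to the right edge. I would emphasize one simplification over the nilpotency construction: there Lemma~\ref{wordend} and the extra relations \eqref{tt4r},\eqref{tt4ar} were needed to move the carried $t$ past a possibly empty tail, whereas here $s$ commutes past every $a_k$ and past $R$ unconditionally, so the degree-one-versus-higher split for $V$ collapses and four cases suffice.

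The only real content, and the step I would check most carefully, is that the indices produced by each relation coincide with the genuine successor state $T(i',j',U',V')$: for a left move the head steps onto the last cell $a_k$ of $U$, so $U'=\tilde U$, $j'=k$, $V'=a_{p(i,j)}V$; for a right move it steps onto the first cell $a_k$ of $V$, so $U'=Ua_{p(i,j)}$, $j'=k$, $V'=\tilde V$; and at a boundary a new color-$0$ cell appears, matching the $P_0$ in \eqref{td5},\eqref{td6},\eqref{td6b},\eqref{td4b}. Checking this bookkeeping against Minsky's table is routine but is where any slip would hide.
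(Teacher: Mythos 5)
Your proposal is correct and follows essentially the same route as the paper's proof: dispose of the $(4,3)$ case via \eqref{td7}, split left pairs by emptiness of $U$ (using \eqref{td1},\eqref{td2} then \eqref{td3} or \eqref{td5}), split right pairs into the four emptiness cases (using \eqref{td6b}, \eqref{td4b}, \eqref{td6}, \eqref{td4}), and finish by commuting $s$ to the right edge with \eqref{td8},\eqref{td9}. Your added remark that the unconditional commutation of $s$ collapses the six-case analysis of Proposition~\ref{ll1} down to four cases is accurate, and your state-bookkeeping ($U'=\tilde U$, $j'=k$, $V'=a_{p(i,j)}V$ for left moves; $U'=Ua_{p(i,j)}$, $j'=k$, $V'=\tilde V$ for right moves) matches the relations.
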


\begin{proof}

Consider the word $tF(i,j,U,V)=tLUQ_iP_jVR$. If $i=4$ and $j=3$ then we can apply relation \eqref{td7}.
Otherwise, suppose that $(i,j)$ is a left pair.

If $U$ is an empty word then $tF(i,j,U,V)=tLQ_iP_jVR$.
Hence we can apply relation \eqref{td5} to obtain $tLQ_iP_jVR= LQ_{q(i,j)}P_0a_jsVR$.
Using \eqref{td9} and \eqref{td8} we finally have
$$tLQ_iP_jVR= LQ_{q(i,j)}P_0a_jsVR= LQ_{q(i,j)}P_0a_jVRs.$$
According to the definition of $q(i,j)$ and $p(i,j)$, the word $LQ_{q(i,j)}P_0a_jVR$ corresponds to the next state of the machine.

If $U$ is not an empty word, we can write $U=U_1a_k$ for some $k$. 
We use the relations \eqref{td1} and \eqref{td2} and obtain that $tLUQ_iP_jVR= LU_1ta_kQ_iP_jVR$. 
Further, we use relation \eqref{td3}:  
$LU_1ta_kQ_iP_jVR = LU_1Q_{q(i,j)}P_ka_{p(i,j)}VRs$. 
The word $LU_1Q_{q(i,j)}P_ka_{p(i,j)}VR$ corresponds to the next state of the machine.

Assume that $(i,j)$ is a right pair.
If $U$ and $V$ are empty words, than we use relation \eqref{td6b}.

If $U$ is empty, and $V=a_k\tilde V$ is not, then we use the relation \eqref{td4b} and obtain
$tLQ_iP_ja_k\tilde VR =La_{p(i,j)}Q_{q(i,j)}P_ks \tilde VR$. After that we use relations \eqref{td8} and \eqref{td9} and move 
$s$ to the right.

Assume $U=\tilde U a_k$ is not empty.
In this case we use the relation \eqref{td2} the length of $\tilde U$ times and obtain $t\tilde U a_k=\tilde U t a_k$.
If $V$ is empty we can use the relation \eqref{td6}. 
If $V=a_l\tilde V$ is not empty then we can use the relation \eqref{td4}, after that we will use relations \eqref{td8} and \eqref{td9} and move 
$s$ to the right.
\end{proof}

\medskip

\begin{proposition} \label{ld3}
The following statements are equivalent:

\begin{enumerate}
\item[(i)] The Turing machine described above begins with the state $T(i,j,U,V)$ and
halts in several steps.

\item[(ii)] There exists a positive integer $N$ such that
$t^N LUQ_iP_jVR= 0$.
\end{enumerate}

\end{proposition}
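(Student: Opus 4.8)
The plan is to mirror the proof of Proposition~\ref{ll3} from the nilpotency section, since the two setups are structurally identical: the only differences are that the trailing letter $t$ is replaced by $s$ (which commutes to the right edge via \eqref{td9}--\eqref{td8}) and the left-edge marker $R$ is replaced by $L$. I would prove the two implications separately.

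\textbf{From (i) to (ii).} First I would establish this direction by induction on the number of steps to halting, exactly as before. The base case: if $T(i,j,U,V)$ halts in one step, then $T(i,j,U,V)\rightarrow T(4,3,U',V')$, and by Proposition~\ref{ld1} we have $tF(i,j,U,V)=F(4,3,U',V')s$. The word $F(4,3,U',V')$ contains the factor $Q_4P_3$, so applying \eqref{td7} kills it, giving $tF(i,j,U,V)=0$, i.e. $N=1$ works. For the inductive step, suppose every state halting in $m$ or fewer steps satisfies (ii). If $T(i,j,U,V)$ halts in $m+1$ steps via first step $T(i,j,U,V)\rightarrow T(i',j',U',V')$, Proposition~\ref{ld1} gives $tF(i,j,U,V)=F(i',j',U',V')s$. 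Since $T(i',j',U',V')$ halts in $m$ steps, the induction hypothesis yields $t^M F(i',j',U',V')=0$ for some $M$; pushing the freshly created $s$ to the right edge and iterating the relation $tF=F's$ upward, one obtains $t^{M+1}F(i,j,U,V)=0$, so $N=M+1$ works.

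\textbf{From (ii) to (i).} Here I would argue that if $t^N LUQ_iP_jVR=0$ in $S$, then since we have a Gr\"obner basis (Proposition~\ref{ld2}), there is a strictly decreasing chain of words from $t^N LUQ_iP_jVR$ to $0$. The only relation producing $0$ is \eqref{td7}, so the penultimate word must contain $Q_4P_3$. As in the earlier argument, I would track the \emph{structure} of each word (its letters after deleting $t$ and $s$): every intermediate word has the shape $LU_k Q_{i_k}P_{j_k}V_k R$ with some power of $t$ on the left and some power of $s$ on the right, because only \eqref{td7} breaks this form and it fires just once, terminating the chain. Each such structure encodes a Turing configuration, and by Proposition~\ref{ld1} a single passage of one $t$ past the head (converting it to an $s$) realizes exactly one forward Turing step. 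Because the order forces the $t$'s to move rightward and degrade into $s$'s monotonically, the chain can only run the machine forward; hence reaching $Q_4P_3$ means the machine genuinely halts after some $k\le N$ steps.

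\textbf{Main obstacle.} The delicate point, as in the nilpotency case, is justifying that no ``backward'' Turing steps occur, i.e. that the decreasing chain cannot undo computation. This rests entirely on the reduction order of Proposition~\ref{ld2}: because $\deg(t)=2$ while $\deg(s)=1$, each application of a computation relation \eqref{td3}--\eqref{td6b} replaces one $t$ (weight $2$) by one $s$ (weight $1$), strictly lowering the weighted degree, so the rewriting is irreversible and the $t$'s can only flow right. I would also need to note that the commutation relations \eqref{td1}, \eqref{td2}, \eqref{td8}, \eqref{td9} preserve weighted degree and merely reposition $t$ and $s$, so they cannot create a loop. Verifying that these observations force every reduced chain to correspond to a monotone forward run of the machine is the crux; everything else parallels Proposition~\ref{ll3} verbatim.
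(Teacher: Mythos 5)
Your proposal is correct and follows essentially the same route as the paper: induction on the number of steps for (i)$\Rightarrow$(ii) using Proposition~\ref{ld1}, and for (ii)$\Rightarrow$(i) a decreasing reduction chain guaranteed by the Gr\"obner basis of Proposition~\ref{ld2}, tracking the structure $LU_kQ_{i_k}P_{j_k}V_kR$ of each intermediate word and observing that only the relation $Q_4P_3=0$ can produce zero. Your explicit remark that irreversibility comes from $\deg(t)=2$ versus $\deg(s)=1$ (so each computation relation strictly drops the weighted degree, ruling out backward machine steps) is exactly the point the paper makes when it says that moving $s$ leftward and turning it back into $t$ is impossible in a decreasing chain.
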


\begin{proof}
First, prove that second statement is a consequence of the first one.

Suppose that $T(i,j,U,V)$ transforms to $T(4,3,U',V')$ in one step.
According to Proposition~\ref{ld1} $tF(i,j,U,V) = F(4,3,U',V')s$.
Then we can apply $Q_4P_3=0$ by \eqref{td7} and obtain zero.

\medskip
Suppose that the statement is true for $m$ (and fewer) steps.
Let the machine begin with
state $T(i,j,k,n)$ and halt after $m+1$ step.
Consider the first step in the chain.
Let it be the step from  $T(i,j,U,V)$ to $T(i',j',U',V')$.
Apply Proposition \ref{ld1} for this step. Hence
$tLUQ_iP_jVR = LU'Q_{i'}P_{j'}V'Rs.$

The machine started in the state $T(i',j',U',V')$ halts in $m$ steps. Using induction we complete the proof.

\medskip

Now let us prove that the first statement is a consequence of the second one.




If $t^N LUQ_iP_jVR= 0$, then there exists a chain of equivalent words, starting with $t^N LUQ_iP_jVR$ and finishing with $0$.
The only way to obtain $0$ is to use a relation $Q_4P_3=0$. Therefore the word before $0$ in the chain contains $Q_4P_3$.

By {\it structure of the word $W$}, $S(W)$ let us denote the word $W$, where all letters $t$ and $s$ will be deleted.
Each word in the chain will have a structure $LU_kQ_{i_k}P_{j_k}V_kR$ because the only relation that breaks this structure is 
$Q_4P_3=0$, and it will be used only one time, in the end of the chain. Note that each structure corresponds to the Turing machine.
The only way to obtain $0$ in this chain is to change indices of $Q$ and $P$ in the structure. This can be done by moving $t$. 

According to the Proposition \ref{ld1}, moving $t$ from the left to the right, and transforming it to $s$ corresponds to the 
Turing Machine's one step. 
Note that there is a Gr\"obner basis on our algebra, thus we can assume that words in the chain decrease. 
In particular, moving $s$ from the right to the left, transforming it to $t$ is impossible.


Therefore we can obtain $Q_4P_3$ only by moving $t$ from the left to $s$ on the right,
and there exists $k\leq N$ such that $t^N LUQ_iP_jVR=t^{N-k}L\tilde U Q_4P_3\tilde VRt^k$.

Therefore the machine halts after $k$ steps.
\end{proof}

\begin{proposition} \label{ld4}
If $Xt= 0$ in $S$, then $X= 0$. If $sX= 0$ in $S$, then $X= 0$.

\end{proposition}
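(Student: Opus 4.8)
The plan is to argue entirely through the Gr\"obner basis normal form furnished by Proposition~\ref{ld2}. By that proposition the reduced words --- the words of $\Psi^*$ containing no leading monomial (obstruction) as a subword --- form a $\Bbb K$-basis of $S$, so it suffices to write $X=\sum_w c_w\,w$ as a $\Bbb K$-linear combination of reduced words and to show that the normal forms of $Xt$ and of $sX$ are again linear combinations of \emph{pairwise distinct} reduced words carrying the \emph{same} coefficients $c_w$; vanishing of the product then forces every $c_w=0$. The obstructions are the left-hand sides of the $t$-relations, namely $tLa_k,\ ta_ka_l,\ ta_kQ_iP_j,\ tLQ_iP_j,\ ta_lQ_iP_ja_k,\ tLQ_iP_ja_k,\ ta_lQ_iP_jR,\ tLQ_iP_jR$, the left-hand sides $sR$ and $sa_k$ of \eqref{td9}--\eqref{td8}, and $Q_4P_3$. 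Two structural features of this list drive the proof: no obstruction ends in the letter $t$, and the only obstructions beginning with $s$ are $sR$ and $sa_k$.

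For the first assertion I would use that no obstruction ends in $t$. Appending $t$ to a reduced word $w$ can create no new obstruction: every subword of $wt$ that uses the last letter ends in $t$, and none of the obstructions does, while every subword not using the last letter already occurs in $w$. Hence $wt$ is again reduced, and since the rewriting is trivial the coefficient is unchanged. The map $w\mapsto wt$ on reduced words is injective (delete the final letter to invert it), so $Xt=\sum_w c_w\,(wt)$ is already in normal form as a combination of pairwise distinct reduced words. Therefore $Xt=0$ implies every $c_w=0$, that is $X=0$.

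For the second assertion the key point is that $s$ can only be rewritten rightward, one letter at a time, past a letter of $\{a_0,\dots,a_3,R\}$ via $sa_k\to a_ks$ and $sR\to Rs$; this is the direction in which words decrease under the order of Proposition~\ref{ld2}, exactly as exploited in Propositions~\ref{ld1} and \ref{ld3}. I would prove that for a reduced $w$ the normal form of $sw$ is obtained by inserting one $s$ immediately after the maximal initial block $B$ of $w$ formed from letters of $\{a_0,\dots,a_3,R\}$: the prepended $s$ slides through $B$ and then halts, because the following letter is one of $Q_i,P_j,L,t,s$ (or $w$ ends), none of which permits a further $s$-reduction. This word is reduced: its letters other than the inserted $s$ keep the relative order they had in $w$, so an obstruction avoiding the new $s$ would already lie in $w$, whereas an obstruction meeting the new $s$ could only be $sR$ or $sa_k$, excluded by the halting position. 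The rewriting preserves the coefficient, and $w\mapsto\mathrm{NF}(sw)$ is injective because $w$ is recovered by locating the maximal initial $\{a_0,\dots,a_3,R\}$-block and deleting the $s$ that immediately follows it. Thus $sX=\sum_w c_w\,\mathrm{NF}(sw)$ is a combination of pairwise distinct reduced words, and $sX=0$ forces $X=0$.

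The bookkeeping in the first paragraph and the coefficient tracking are routine. The step I expect to be the main obstacle is the normal-form computation for $sw$: one must check carefully that the rightward slide of the prepended $s$ terminates precisely after the initial $\{a_0,\dots,a_3,R\}$-block and that it never manufactures a $t$-obstruction or a copy of $Q_4P_3$, so that $\mathrm{NF}(sw)$ is genuinely reduced and the inverse map is well defined; granting this, injectivity and the conclusion are immediate.
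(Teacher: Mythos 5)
Your proof is correct, and it takes a genuinely different route from the paper's. The paper argues directly on chains of relation applications: it calls a letter $t$ \emph{almost last} if it is followed only by letters $a_k$ and $L$, notes that the only defining relations in which such a $t$ can participate are \eqref{td1} and \eqref{td2}, concludes that the trailing $t$ of $Xt$ stays almost last (hence inert) through any chain of equalities, so that a chain taking $Xt$ to $0$ can be replayed verbatim on $X$; the $sX$ case is treated symmetrically with the roles of \eqref{td8}--\eqref{td9} in place of \eqref{td1}--\eqref{td2}. You instead work with the $\mathbb{K}$-basis of reduced words that Proposition \ref{ld2} supplies via the diamond lemma, and show that $w\mapsto wt$ and $w\mapsto \mathrm{NF}(sw)$ carry reduced words injectively to reduced words with unchanged coefficients, so vanishing of the normal form of $Xt$ or $sX$ kills every coefficient of $X$. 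Your explicit computation of $\mathrm{NF}(sw)$ --- the prepended $s$ slides exactly past the maximal initial block of letters from $\{a_0,\dots,a_3,R\}$ and then stops --- is the normal-form counterpart of the paper's invariant, and both arguments hinge on the same two structural facts: no obstruction ends in $t$, and $s$ occurs in obstructions only as the first letter of $sR$ and $sa_k$. What your route buys: it is more rigorous than the paper's rather informal ``we can situate it on the right edge \dots\ and do the same with the word $X$'', and it proves the statement for arbitrary linear combinations $X$ rather than only monomials, which is the natural level of generality for how Proposition \ref{ld5} is used afterwards. What the paper's route buys: brevity, no explicit appeal to the basis of reduced words, and an immediate picture of why the appended letter can never reach the halting relation \eqref{td7}.
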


\begin{proof}
Suppose that we apply some relations and transform $Xt$ to zero.

We say that the letter $t$ is {\it almost last} if the word has the form $Y_1tY_2$,
and $Y_2$ contains $a_k$ and $L$ letters only.
Note that if an almost last $t$-letter occurs in some relation then this relation is \eqref{td1} or \eqref{td2}.
Therefore that $t$-letter is always almost last.
It is clear that an almost last $t$-letter always exists in every word which is equivalent to $Xt$.
Since an almost last $t$-letter never participates in relations  \eqref{td9}--\eqref{td7},
we can situate it on the right edge of we word $Xt$ while we use our relations.
We did not use the $t$-letter, and therefore we can do the same with the word $X$.

Similarly we can prove that if $sX= 0$ then $X= 0$.
\end{proof}

\begin{proposition} \label{ld5}
If $Xt^n= 0$ in $S$, then $X= 0$. If $s^nX= 0$ in $S$, then $X= 0$.
\end{proposition}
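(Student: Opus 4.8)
The plan is to bootstrap from Proposition~\ref{ld4} by induction on $n$, exploiting the fact that that proposition already removes a single trailing $t$ (respectively, a single leading $s$). The key observation is that an element carrying $n$ trailing copies of $t$ can be read as an element carrying just one trailing $t$, namely by regarding the first $n-1$ copies of $t$ as part of the prefix. This reduces the general claim to the case $n=1$, which is precisely Proposition~\ref{ld4}.

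Concretely, I would argue as follows. The base case $n=1$ is Proposition~\ref{ld4} verbatim. For the inductive step, assume the statement for $n-1$ and suppose $Xt^n=0$ in $S$. Grouping the letters as $Xt^n=(Xt^{n-1})\,t$, I set $Y:=Xt^{n-1}$ and apply the first assertion of Proposition~\ref{ld4} with $Y$ in place of $X$; since $Yt=0$, it gives $Y=Xt^{n-1}=0$. The inductive hypothesis then forces $X=0$. The second assertion is treated symmetrically: writing $s^nX=s\,(s^{n-1}X)$ and applying the second assertion of Proposition~\ref{ld4} yields $s^{n-1}X=0$, after which induction gives $X=0$. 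Thus each copy of $t$ (or $s$) is peeled off one at a time.

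The only point that requires any care, and it is not a genuine obstacle, is that the regrouping $Xt^n=(Xt^{n-1})\,t$ is a legitimate identification of elements of $S$: equality in $S$ is equality in the semigroup, so associativity is automatic and Proposition~\ref{ld4} applies unchanged to the element $Xt^{n-1}$ viewed together with its final letter $t$. All the substantive content therefore resides in Proposition~\ref{ld4}, and Proposition~\ref{ld5} follows by this routine induction on the number of trailing (respectively leading) occurrences of $t$ (respectively $s$).
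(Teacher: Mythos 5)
Your proof is correct and is exactly the induction the paper has in mind: the paper's own proof of Proposition~\ref{ld5} consists of the single sentence ``We can prove this by induction,'' and your argument simply spells out that induction, peeling off one trailing $t$ (or leading $s$) at a time via Proposition~\ref{ld4}. Nothing further is needed.
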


\begin{proof}
We can prove this by induction.
\end{proof}

Now we are ready to prove the theorem above.

\begin{theorem} \label{th2}
Consider an algebra $H$ presented by the defining
relations \eqref{td1}--\eqref{td7}.

The word $LUQ_iP_jVR$ is a zero divisor in the algebra $H$ if and only if machine $T(i,j,U,V)$ halts.
\end{theorem}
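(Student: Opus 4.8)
The plan is to show that $W = LUQ_iP_jVR$ is a zero divisor in $H$ precisely when some power of $t$ annihilates it from the left, and then to invoke Proposition~\ref{ld3}. The forward implication is immediate: if the machine halts, Proposition~\ref{ld3} furnishes an $N$ with $t^N W = 0$, and since none of the leading monomials of the Gr\"obner basis of Proposition~\ref{ld2} is a pure power of $t$, the element $t^N$ is nonzero. Hence $t^N$ is a nonzero left annihilator of $W$, so $W$ is a zero divisor. (If $(i,j)=(4,3)$ then $W=0$ and the machine halts in zero steps, so this degenerate case is consistent and can be set aside.)

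For the converse I would first rule out right annihilators. Suppose $WY = 0$ with $Y\neq 0$. Every left-hand side in the basis that contains $R$ has $R$ as its last letter, and every relation moves $t$ and $s$ rightward (possibly converting a $t$ into an $s$); consequently no rewrite can ever straddle the final $R$ of $W$ from the right, and $W$ carries no internal redex once $(i,j)\neq(4,3)$. Thus $W$ stays intact under all reductions of $WY$, the product is always $W$ times a reduct of $Y$, and the only way to create the forbidden factor $Q_4P_3$ is inside $Y$ itself. Hence $WY=0$ forces $Y=0$, a contradiction, and a zero-divisor $W$ must admit a nonzero \emph{left} annihilator $Y$ with $YW=0$.

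For the left annihilator I would argue on normal forms. Write $Y=\sum_k c_k y_k$ as a combination of distinct normal-form words with $c_k\neq 0$, and let $m_k$ be the length of the maximal trailing run of $t$'s in $y_k$, so $y_k=(y_k)_0 t^{m_k}$ with $(y_k)_0$ not ending in $t$. Since a $t$ can enter $W$ only through the boundary redex $tL\cdots$, only this trailing run acts on $W$; applying Proposition~\ref{ld1} $m_k$ times converts $t^{m_k}W$ into $F^{(m_k)}s^{m_k}$, where $F^{(m_k)}$ is the state after $m_k$ steps, provided the machine has not halted within them. The junctions $(y_k)_0\mid F^{(m_k)}$ and $F^{(m_k)}\mid s^{m_k}$ create no new redex, so the normal form of $y_kW$ is exactly $(y_k)_0 F^{(m_k)} s^{m_k}$. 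Assuming the machine never halts, each such word is genuinely nonzero, and free-monoid cancellation of the common suffix $F^{(m_k)}s^{m_k}$ shows that distinct $y_k$ yield distinct normal forms; hence $YW$ is a nontrivial combination of distinct basis elements and cannot vanish. This contradiction shows that $YW=0$ with $Y\neq 0$ forces the machine to halt.

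The regularity Propositions~\ref{ld4} and~\ref{ld5} enter to streamline this last step: they assert that $t$ is right-regular and $s$ is left-regular, which is exactly what lets one peel trailing $t$'s and leading $s$'s off an annihilator without loss of information, reducing a general vanishing product to the pure-power situation $t^N W=0$ governed by Proposition~\ref{ld3}. The main obstacle is precisely this left-annihilator analysis: one must check that no $t$ buried inside $(y_k)_0$ can migrate rightward to reach the head (in normal form it is blocked), and that processing the trailing $t$'s never produces $Q_4P_3$ unless the machine truly reaches state $(4,3)$. Once these bookkeeping points are secured, the chain of equivalences ``$W$ is a zero divisor $\iff$ $W$ is left-annihilated by some $t^N$ $\iff$ $T(i,j,U,V)$ halts'' is complete.
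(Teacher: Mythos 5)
Your argument is correct, but its hard direction runs along a genuinely different track from the paper's. The forward implication (halting $\Rightarrow$ zero divisor, via Proposition \ref{ld3} plus the observation that $t^N$ is a normal-form word and hence nonzero) is the same in both. For the converse, the paper analyzes a general vanishing product $XLUQ_iP_jVRY=0$: first for monomials $X,Y$, arguing that the fatal factor $Q_4P_3$ can only arise between $L$ and $R$ (Propositions \ref{ld4} and \ref{ld5} exclude the outer regions) and that structure changes in that zone are exactly machine steps; then it reduces arbitrary algebra elements to this monomial case by a minimality argument involving the invariant $\deg_t+\deg_s$ and an analysis of which monomials can collide. You avoid that two-stage reduction entirely: you split zero-divisorhood into right and left annihilators, dispose of the right case by noting that no left-hand side of the basis can straddle the terminal $R$ (every left-hand side begins with $t$, $s$ or $Q_4$, while $W$ contains no $t$ or $s$ and ends in $R$), and in the left case you compute the exact normal form $(y_k)_0F^{(m_k)}s^{m_k}$ of each $y_kW$ under the assumption that the machine never halts, concluding from uniqueness of normal forms that $YW\neq 0$. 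This is arguably cleaner: it leans directly on the diamond lemma and sidesteps the paper's similarity/minimality bookkeeping, which is the murkiest part of the published proof; in exchange, the paper's route establishes the formally stronger two-sided statement ($XWY=0$ with $X,Y\neq 0$ already forces halting), and it makes essential use of Propositions \ref{ld4} and \ref{ld5}, which in your argument are only decorative. Two points you should tighten when writing this up: to see that distinct $y_k$ give distinct normal forms, first compare the trailing $s$-runs (since $F^{(m_k)}$ ends in $R$, that run has length exactly $m_k$, forcing $m_k=m_l$) and only then cancel the common suffix $F^{(m_k)}s^{m_k}$; and note explicitly that $W$ is itself a nonzero normal-form word when $(i,j)\neq(4,3)$, so zero-divisorhood is not vacuously attained.
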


\begin{proof}
Suppose that machine $T(i,j,U,V)$ halts. Using Proposition \ref{ld3} we have $t^N LUQ_iP_jVR= 0$ for some positive integer $N$.
Thus, the word $LUQ_iP_jVR$ is a zero divisor.

\medskip

Let $XLUQ_iP_jVRY = 0$ for some algebra elements $X,Y \ne 0$. Suppose that $X$, $Y$ are some words.

Note that $L$ and $R$ letters cannot disappear from the word.
Hence we can divide our word into three parts: to the left of $L$, to the right of $R$, and between $L$ and $R$.
There is only one relation which can turn the word $XLUQ_iP_jVRY$ to zero: $Q_4P_3=0$.
Thus this subword $Q_4P_3$ can appear in three possible parts of the word.
Note that only $t$ letters can pass through $L$ and only $s$ letters can pass through $R$.
Every relation can change nothing in the area to the left side of $L$ and to the right side of $R$, except $t$ and $s$-letters occurrences.
Therefore if $Q_4P_3$ appears to the left of $L$, then $Xs^n= 0$. Using Proposition~\ref{ld5} we obtain a contradiction: $X= 0$. Similarly if  $Q_4P_3$ appears to the right of $R$, then $Y= 0$. Thus $Q_4P_3$ appears between $L$ and $R$.

\medskip
Consider the {\it structure} of the word $LUQ_iP_jVR$.
For any structure of a word equivalent to $LUQ_iP_jVR$ there exists a corresponding state of the machine.
Since only $t$ letters can pass through $L$ and only $s$ letters can pass through $R$,
we can change the structure of the word $LUQ_iP_jVR$ by turn to the next or the previous machine state.
If $Q_4P_3$ appears between $L$ and $R$ then we can obtain a STOP state. Thus the machine $T(i,j,U,V)$ halts.

\medskip

Now let us consider the general case: $X$, $Y$ are some algebra elements.
Suppose that $X=c_1X_1+\dots c_nX_n$, $Y=d_1Y_1+\dots d_mY_m$, where
$X_k$, $Y_l$ are words, and $c_k$ and $d_l$ are elements of the field. Without loss of generality we may assume that
$n$ is  the minimal possible, and for this $n$ $m$ is the minimal possible. We also may assume that $X_k$, $Y_l$ are written 
in the reduced form. We assume that either $n>1$, or $m>1$.

Consider the function $\tilde h:\Psi^*\rightarrow\N_0$: for any word $w$ $\tilde h(w)=\deg_t(w)+\deg_s(w)$.
Note that relations \eqref{td1}-\eqref{td6b} do not change value of $\tilde h$, therefore it is invariant under the word reduction.
Assume that $\tilde h(X_{k_1})\neq\tilde h(X_{k_2})$. 
In this case we will take a subset $S_x\subseteq\{1,\dots,n\}$ such that $\tilde h$ takes a maximal value on $X_k$ for $k\in S_x$.
We also take a subset $S_y\subseteq\{1,\dots,m\}$ such that $\tilde h$ takes a maximal value on $Y_l$ for $l\in S_y$.
We know that $(c_1X_1+\dots c_nX_n)W(d_1Y_1+\dots d_mY_m)=\sum_{k,l} c_kd_l X_kLUQ_iP_jVRY_l=0$.
Therefore one can reduce this element to zero. 
However none of the elements $X_kLUQ_iP_jVRY_l$ can be reduced to zero.
Thus, all elements $X_kLUQ_iP_jVRY_l$ can be separated to several sets of similar words.
Note that all words $X_kLUQ_iP_jVRY_l$ (where $k\in S_x$ and $l\in S_y$) can be similar only to a word $X_{k'}LUQ_iP_jVRY_{l'}$
where $k'\in S_x$ and $l'\in S_y$. Hence, $(\sum\limits_{k\in S_x}c_kX_k)\cdot LUQ_iP_jVR \cdot (\sum\limits_{l\in S_y}d_lY_l)=0$.
A contradiction ($n$ was taken as a minimal possible).
Therefore $\tilde h(X_k)$ does not depend on $k$ and $\tilde h(Y_l)$ does not depend on $l$.

We have $XLUQ_iP_jVRY = \sum_{k,l} c_kd_l X_kLUQ_iP_jVRY_l$.
We can consider our defining relations as reductions and use them to find the Gr\"obner basis of every term $X_kLUQ_iP_jVRY_l$.
Let us fix the $t$'s at the end of the $X_k$ words: $X_k=X'_kt^{q_k}$. These are lexicographical equalities and $q_k\ge 0$.

Since $\sum_{k,l} c_{k,l} X'_kt^{q_k}LUQ_iP_jVRY'_l= 0$, this sum (in the reduced form) can be separated into several sets
of similar monomials. Consider one of these sets:
$\sum X'_ut^{x_u}LUQ_iP_jVRY'_u$. If these monomials are similar then all $X'_u$ must be also similar.
Recall that $\tilde h(X_k)$ does not depend on $k$, therefore  all $x_u$ must be the same.

Hence, $n=1$, and $m>1$ and we have a situation $XLUQ_iP_jVR(\sum_{l=1}^m d_lY_l)=0$, where $X\in\Psi^*$ is a word, and $m$ is minimal.
Therefore, all words $XLUQ_iP_jVRY_l$ should be equal in the algebra, however $Y_l$ should be pairwise different.
If we will reduce word $WRY_l$ (for $W=XLUQ_iP_jV$), only letter $s$ can pass through $R$, therefore the only case 
to reduce it is to pass letters $s$ from $W$ to $Y_l$. The number of these letters $s$ depend on $W$ therefore it will be similar.
Therefore we will have an equality $s^kY_1=s^kY_2=\dots=s^kY_m$ (for some non negative number $k$) in the algebra.
Note that relations with letter $s$ do not change a structure of the word, and one can see that for any two different words 
$Y$ and $Z$, 
words $sY$ and $sZ$ must be also different. Therefore $s^kY_1\neq s^k Y_2$ in the algebra, and $m$ cannot be larger than $1$.

This contradiction completes the proof.

\end{proof}

\medskip

Since the halting problem cannot be algorithmically solved,
the zero divisors problem in algebra $H$ is algorithmically unsolvable.

\medskip

\begin{rem*}
 We can consider two semigroups corresponding to our algebras: 
 in both algebras each relation is written as an equality of two monomials.
 Therefore the same alphabets together with the same sets of relations define semigroups.
 In both semigroups the equality problem is algorithmically solvable, since it is solvable in algebras.
 However in the first semigroup a nilpotency problem is algorithmically unsolvable, 
 and in the second semigroup a zero divisor problem is algorithmically unsolvable.
\end{rem*}

\pagestyle{headings}

\end{document}